\newtheorem{theorem}{Theorem}[section]
\newtheorem{lemma}[theorem]{Lemma}
\newtheorem{proposition}[theorem]{Proposition}
\theoremstyle{definition}
\newtheorem{hypothesis}[theorem]{Hypothesis}
\newtheorem{example}[theorem]{Example}
\newtheorem{remark}[theorem]{Remark}
\numberwithin{equation}{section}
\let\c@equation\c@theorem  % incorporate equation numbering
\newcommand{\FF}{{\mathbb{F}}}
\newcommand{\QQ}{{\mathcal{Q}}}
\begin{document}

\title[Finite dimensional Hopf actions on Weyl algebras]{Finite dimensional Hopf actions on Weyl algebras}

\author{Juan Cuadra}
\address{Department of Mathematics, University of Almer\'{i}a, 04120 Almer\'{i}a,
Spain}
\email{jcdiaz@ual.es}

\author{Pavel Etingof}
\address{Department of Mathematics, Massachusetts Institute of Technology,
Cambridge, Massachusetts 02139, USA}
\email{etingof@math.mit.edu}

\author{Chelsea Walton}
\address{Department of Mathematics, Temple University,
Philadelphia, Pennsylvania 19122, USA}
\email{notlaw@temple.edu}

\subjclass[2010]{16T05, 16W70, 16S32, 13A35.}
\keywords{Hopf algebra action, Weyl algebra, algebra of differential operators, reduction modulo prime powers.}

\maketitle

\begin{abstract}
We prove that any action of a finite dimensional Hopf algebra $H$ on a Weyl algebra $A$ over an algebraically closed field of characteristic zero factors through a group action. In other words, Weyl algebras do not admit genuine finite quantum symmetries. This improves a previous result by the authors, where the statement was established for semisimple $H$. The proof relies on a refinement of the method previously used: namely, considering reductions of the action of $H$ on $A$ modulo prime powers rather than primes. We also show that the result holds, more generally, for algebras of differential operators. This gives an affirmative answer to a question posed by the last two authors.
\end{abstract}

\maketitle

\section{Introduction}

Let $k$ be an algebraically closed field of characteristic zero.  In \cite[Theorem 1.3]{EW1}, it is shown that any action of a semisimple Hopf algebra $H$ on a commutative domain over $k$ factors through a group action. In particular, if the action is inner faithful, i.e., does not factor through that of a Hopf algebra of smaller dimension, then $H$ is a group algebra. \par \smallskip

As an application of this result, it is proved in \cite[Corollary 5.5]{EW1} that if $H$ acts on $\bold A_n(k)$, the $n$-th Weyl algebra over $k$, and the action preserves the standard filtration, then the action factors through a group action. The idea is to use the associated graded algebra. \par \smallskip

This result was complemented in \cite[Theorem 4.1]{CEW} with a similar statement, but replacing the stability of the filtration by the semisimplicity of $H$. The strategy in this case is different. The idea is to reduce the action to positive characteristic, where $\bold A_n(k)$ becomes an Azumaya algebra over its center, and then pass it to the division ring of quotients. The center of the latter is stabilized by the action and \cite[Theorem 1.3]{EW1} is used again. \par \smallskip

The goal of this paper is to prove the desired unconditional statement:

\begin{theorem}\label{main}
Any action of a finite dimensional Hopf algebra $H$ on $\bold A_n(k)$ factors through a  group action.
\end{theorem}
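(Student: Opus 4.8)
The plan is to reduce the statement to a question of cocommutativity and then detect cocommutativity by reduction modulo prime powers. Since any $H$-action on $\mathbf{A}_n(k)$ factors through the quotient $\bar H = H/I$, where $I$ is the largest Hopf ideal annihilating $\mathbf{A}_n(k)$, and the induced $\bar H$-action is inner faithful, it suffices to prove that an inner faithful action forces $\bar H$ to be a group algebra. By the Cartier--Kostant theorem, a finite dimensional cocommutative Hopf algebra over $k$ is a group algebra, so my goal becomes to show that an inner faithful $H$-action on $\mathbf{A}_n(k)$ forces $H$ to be cocommutative; equivalently, fixing a ``non-cocommutativity tensor'' $c = \Delta(h) - \Delta^{\mathrm{op}}(h) \in H \otimes H$, I must prove $c = 0$.

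First I would spread the whole picture out over a finitely generated domain $R \subseteq k$: choose $R$ so that the Hopf algebra $H$ (free of finite rank over $R$, with comultiplication, counit and antipode defined over $R$), the Weyl algebra $\mathbf{A}_n(R)$, the action map $H_R \otimes_R \mathbf{A}_n(R) \to \mathbf{A}_n(R)$, and the element $c$ are all defined over $R$. Arguing by contradiction, assume the action is inner faithful but $H$ is not cocommutative, so $c \ne 0$ in $H_R \otimes_R H_R$.

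Next, for infinitely many maximal ideals $\mathfrak p \subset R$, say of residue characteristic $p$ and residue field $\FF_q$, I would reduce the action modulo the powers $\mathfrak p^m$. Modulo $\mathfrak p$ the Weyl algebra $\mathbf{A}_n(\FF_q)$ becomes Azumaya over its center $Z = \FF_q[x_1^p,\dots,\partial_n^p]$, a polynomial ring in $2n$ variables, and this central Azumaya structure deforms over the Artinian thickenings $R/\mathfrak p^m$. Following the strategy of \cite[Theorem 4.1]{CEW}, the first task is to show that the reduced action preserves this center, producing an action of $H_m := H_R/\mathfrak p^m H_R$ on a commutative ring $Z_m$; passing to fractions and lifting the relevant commutative data back to characteristic zero, I would then apply \cite[Theorem 1.3]{EW1} to conclude that at each level the induced action factors through a group, i.e.\ $H_m$ acts cocommutatively and hence $c \equiv 0 \pmod{\mathfrak p^m}$. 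The essential reason for working modulo $\mathfrak p^m$ rather than modulo $\mathfrak p$ --- the refinement over \cite{CEW} --- is that the genuinely quantum (non-cocommutative) part of $H$, together with the inner faithfulness of the action, typically degenerates modulo $\mathfrak p$: for instance a primitive $p$-th root of unity entering the structure constants of $H$ becomes $1$ modulo the ramified prime above $p$, and is only retained modulo its higher powers. Once $c \equiv 0 \pmod{\mathfrak p^m}$ holds for all $m$, the Krull intersection theorem ($\bigcap_m \mathfrak p^m = 0$ in the domain $R$) forces $c = 0$, a contradiction; thus $\bar H$ is cocommutative and therefore a group algebra.

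I expect two steps to be the main obstacles. The first is the preservation of the center under the reduced action: group-like elements act by automorphisms and visibly preserve $Z$, but skew-primitive (and more general) elements act by twisted derivations, for which stability of $Z$ is not automatic and must be checked over the non-reduced base $R/\mathfrak p^m$. The second, and more delicate, is the transition from characteristic $p$ back to the characteristic-zero input \cite[Theorem 1.3]{EW1}, together with the claim that inner faithfulness survives reduction modulo $\mathfrak p^m$ --- precisely the feature that fails modulo $\mathfrak p$ and that makes the passage to prime powers indispensable.
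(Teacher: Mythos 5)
There is a genuine gap, and it sits at the heart of your argument: the step where, after reducing modulo $\mathfrak{p}^m$ and stabilizing the center, you ``lift the relevant commutative data back to characteristic zero'' and apply \cite[Theorem 1.3]{EW1}. That theorem assumes $H$ is \emph{semisimple}, whereas here $H$ is an arbitrary finite dimensional Hopf algebra --- removing the semisimplicity hypothesis is precisely what Theorem~\ref{main} accomplishes beyond \cite[Theorem 4.1]{CEW}. For non-semisimple $H$ the statement you want to import is simply false: there exist inner faithful actions of non-semisimple Hopf algebras (e.g.\ Taft algebras) on commutative domains, even on polynomial rings in one variable, see \cite{EW2}. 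No amount of care in the reduction and lifting can repair this; the paper explicitly flags, in item (2) of the introduction, that its proof does \emph{not} use the main result of \cite{EW1}, exactly for this reason. What replaces it is a new ingredient your proposal lacks: Theorem~\ref{tower}, which says that if a finite dimensional Hopf algebra over an algebraically closed field of characteristic $p>\dim H$ acts inner faithfully on a finitely generated field $Z$ of characteristic $p$ and preserves \emph{all} the subfields $Z^{p^m}$, $m\ge 1$, then it is a group algebra. Its proof is a purely characteristic-$p$ argument (Galois map, coideal subalgebra $B\subset Z\otimes H^*$, Pl\"ucker coordinates) hinging on $\bigcap_{m}Z^{p^m}=\overline{\FF}_p$; nothing is lifted back to characteristic zero.

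This also corrects your explanation of why prime powers are indispensable. It is not that inner faithfulness degenerates modulo $\mathfrak{p}$: for $p\gg 0$ the reduction modulo $p$ of an inner faithful action remains inner faithful (\cite[Proposition 2.4]{CEW}), and the paper relies on exactly this fact. The true role of the reductions modulo $p^m$ is to manufacture the hypothesis of Theorem~\ref{tower}. The center $Z_m$ of the localization $D_{p^m}$ of $\mathbf{A}_n(W_{m,p})$ is shown to be stable under the action --- via the ``abundance of invariants'' Lemmas~\ref{generation} and~\ref{saturation}, which require reducing characteristic-zero invariants modulo $p^m$ and a spectral-sequence/derivation argument, followed by the centralizer computation of Lemma~\ref{central} --- and the image of $Z_m$ in $D_p$ is $Z^{p^{m-1}}$ (Lemma~\ref{center}); this is how one learns that $H_p$ preserves every $Z^{p^m}$. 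Your proposal correctly identifies center-stability as an obstacle, but offers no mechanism for it, and the mechanism the paper supplies (saturation: invariants modulo $p^{m-1}$ lift modulo $p^m$) is itself one of the paper's main technical contributions; without it, and without Theorem~\ref{tower} to exploit the resulting tower of stable subfields, the strategy of \cite{CEW} does not extend past the semisimple case.
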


In particular, if the action is inner faithful, Theorem \ref{main} implies that $H$ must be a group algebra. In other words, the Weyl algebra has no genuine finite quantum symmetries. \par \smallskip

The proof of Theorem \ref{main} uses ideas from that of \cite[Theorem 4.1]{CEW}, but differs from it in several important ways:
\begin{enumerate}
\item The proof uses reduction modulo prime powers and not just modulo primes; \vspace{3pt}
\item  The proof does {\it not} use the main result of \cite{EW1} (cf. \cite[proof of Proposition 3.3(ii)]{CEW}); \vspace{3pt}
\item Unlike \cite{CEW}, the proof (and in fact, the theorem itself) fails  when $\bold A_n(k)$ is replaced by $\bold A_n(k[z_1,\dots,z_s])$, see \cite[Proposition 4.3]{CEW}. This happens even for $n=0$, as there are inner faithful actions of non-semisimple Hopf algebras on polynomial algebras, see for example \cite{EW2} and references therein.
\end{enumerate}
\par \smallskip

At the end of the paper, we show that our result extends to finite dimensional Hopf actions on algebras of differential operators. More precisely, we prove:

\begin{theorem} \label{diffop}
Let $D(X)$ be the algebra of differential operators on a smooth affine irreducible variety $X$ over $k$. Then, any finite dimensional Hopf action on $D(X)$ factors through a group action.
\end{theorem}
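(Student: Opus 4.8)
The plan is to run the proof of Theorem~\ref{main} with the Weyl algebra replaced by $D(X)$, observing that $\bold A_n(k)$ is precisely the algebra of differential operators on $\mathbb A^n$, so Theorem~\ref{diffop} is a genuine generalization rather than a new phenomenon. First I would reduce to the inner faithful case: replacing $H$ by the minimal quotient Hopf algebra through which the action factors, it suffices to show that this quotient is a group algebra $k[G]$. Since we work over an algebraically closed field of characteristic zero and the Hopf algebra is finite dimensional, by the Cartier--Gabriel--Kostant theorem this is equivalent to showing that it is cocommutative (finite dimensionality killing any primitive, i.e. Lie-algebra, part). Thus the target becomes: an inner faithful finite dimensional Hopf action on $D(X)$ forces $H=k[G]$.

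Next I would spread out the data over a finitely generated domain $R\subseteq k$: a smooth affine irreducible $R$-scheme $X_R$ with generic fiber $X$, a Hopf $R$-algebra $H_R$ that is free of finite rank with $H_R\otimes_R k\cong H$, and an $R$-linear action of $H_R$ on $D(X_R)$, the ring generated by $\mathcal{O}(X_R)$ and the $R$-derivations (which agrees with $D(X)$ after base change to $k$ by smoothness). After shrinking $\operatorname{Spec}R$ I may assume that all relevant formations commute with base change and that the reductions remain smooth, and then consider the reductions of the action modulo prime powers $p^m$ at closed points of $\operatorname{Spec}R$, exactly as in the proof of Theorem~\ref{main}.

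The structural input that makes this work is that in characteristic $p$ the algebra $D(X_{\mathbb F_q})$ is a simple Noetherian domain which is Azumaya over its center $Z$, with $Z\cong \mathcal{O}\bigl(T^*X^{(1)}\bigr)$, the functions on the cotangent bundle of the Frobenius twist; this is a finitely generated \emph{smooth} commutative domain of dimension $2\dim X$, playing the role of the polynomial center $\mathbb F_q[x_i^{\,p},\partial_i^{\,p}]$ of the reduced Weyl algebra. Granting this, the argument of Theorem~\ref{main} applies with only cosmetic changes, since the only features of $\bold A_n$ that its proof consumes are that the reduction modulo $p$ is a simple Noetherian domain, Azumaya over a finitely generated smooth central domain, and that these reductions assemble coherently modulo prime powers. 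In particular, the handling of the center and of the non-semisimple part of $H$ is carried out through the prime-power reduction precisely as in Theorem~\ref{main}, so no appeal to the commutative-domain result of \cite{EW1} is needed.

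The main obstacle I anticipate is entirely at the level of this structural input rather than in the Hopf-theoretic mechanism. One must verify that the characteristic-$p$ reduction of the characteristic-zero $D(X)$ is indeed the small differential operator ring that is Azumaya over $\mathcal{O}(T^*X^{(1)})$, which requires choosing $R$ so that $X_R$ and its cotangent bundle behave well over all relevant residue characteristics and invoking the positive-characteristic Azumaya theorem for differential operators on smooth varieties. The genuinely new feature compared with the Weyl algebra is that the central variety $T^*X^{(1)}$ need no longer be an affine space; the key point to confirm is that its smoothness and finite generation, rather than its explicit coordinatization, are all that the prime-power argument of Theorem~\ref{main} actually uses, so that $D(X)$ behaves like a Weyl algebra for the purposes of the proof and the conclusion $H=k[G]$ follows.
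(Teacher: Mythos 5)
Your proposal is correct and takes essentially the same route as the paper, which proves Theorem~\ref{diffop} by rerunning the proof of Theorem~\ref{main} with $A=D(X)$, using smoothness of the reduction of $X$ modulo $p$ for $p\gg 0$ and generic $\psi$, and replacing $x_i,y_i$ throughout (in Lemma~\ref{express} and the proof of Lemma~\ref{saturation}) by a finite generating set $L_1,\dots,L_r$ of $D(X)$. The one point you leave ``to confirm'' is exactly where the paper does its only new work: rather than citing the positive-characteristic Azumaya theorem (which by itself controls only the mod-$p$ center, not the centers of the mod-$p^m$ reductions), it picks local coordinates $x_1,\dots,x_n$ at a point of $X$, sets $y_i=\partial/\partial x_i$ (rational vector fields), takes generators $f_1,\dots,f_q$ of $\mathcal{O}(X)$, and checks in these coordinates that $Z_m=K_m+pZ_{m-1}$ with $K_m=W_{m,p}(f_i^{p^m},y_j^{p^m})$ --- precisely the fact needed for Lemma~\ref{center}, and hence for Proposition~\ref{invariance} and Theorem~\ref{tower}, to go through.
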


Theorem \ref{main} is a special case of Theorem~\ref{diffop}, for $X=\Bbb A^n$. Theorem~\ref{diffop} gives an affirmative answer to \cite[Question 5.7]{EW1}, even without the assumption on the stability of the filtration. \par \smallskip

Arguing as in the proof of \cite[Proposition 4.4]{CEW}, one can show that Theorem \ref{diffop} remains valid when $D(X)$ is replaced by its division ring of quotients ${\mathcal Q}_{D(X)}$. \par \smallskip

It would be interesting to establish absence of genuine finite quantum symmetries for more general classes of noncommutative algebras. This is the subject of future work. We refer the reader to \cite{Kirkman} for an account on recent developments in the study of Hopf actions on some natural classes of noncommutative algebras. \par \smallskip

The paper is organized as follows. Preliminary results on invariants of Hopf actions on Weyl division algebras and on reduction modulo prime powers are provided in Section~\ref{prelim}. In Section~\ref{towersec}, we establish an auxiliary result on Hopf actions on fields in positive characteristic. The proofs of Theorems~\ref{main} and~\ref{diffop} are given in Section~\ref{proofmain}.

\section{Preliminary results} \label{prelim}

Unless stated otherwise, we will use the definitions and results from \cite{CEW} throughout the paper. We recall the notation, assumptions and some facts from there: \medskip

\noindent $\bullet$ $k$ is an algebraically closed field of characteristic zero; \smallskip

\noindent $\bullet$ $H$ is a finite dimensional Hopf algebra over $k$; \smallskip

\noindent $\bullet$ $A$ is the $n$-th Weyl algebra $\bold A_n(k)$ generated by $x_i$, $y_i$ for $i = 1, \ldots ,n$, subject to relations $[x_i,x_j]=[y_i,y_j]=0$ and $[y_i,x_j]=\delta_{ij}$. We assume that it carries an inner faithful $H$-action $\cdot : H \otimes_k A\to A$; \smallskip

\noindent $\bullet$ $R$ is a finitely generated subring of $k$ containing the structure constants of $H$ and those of the $H$-action; \smallskip

\noindent $\bullet$ $H_R$ is a Hopf $R$-order of $H$, so that the multiplication by scalars induces an isomorphism $H_R \otimes_R k \cong H$. The $H$-action restricts to an action \linebreak $\cdot_R:H_R \, \otimes_R A_R \to A_R$, with $A_R:=\bold A_n(R)$. See \cite[Lemma 2.2]{CEW}; \smallskip

\noindent $\bullet$ $H_p:=H\otimes_R \overline{\Bbb F}_p$ is the reduction of $H$ modulo a sufficiently large prime number $p$, associated to a homomorphism $\psi: R\to \overline{\Bbb F}_p$. See \cite[Lemma~2.3]{CEW};

\smallskip

\noindent $\bullet$ $A_p:=\bold A_n(\overline{\mathbb{F}}_p)$ is the reduction of $A$ modulo $p$. By tensoring the $H_R$-action $\cdot_R:H_R \otimes A_R\to A_R$ with $\overline{\FF}_p$ over $R$ we endow $A_p$ with an inner faithful $H_p$-action $\cdot_p: H_p \otimes_{\overline{\FF}_p} A_p \to A_p$, see \cite[Proposition 2.4]{CEW}; \smallskip

\noindent $\bullet$ $D_p$ is the full localization of $A_p$, a division algebra over $\overline{\mathbb{F}}_p$, which, by \cite[Lemma 3.1]{CEW} carries an inner faithful action of $H_p$ induced from that on $A_p$; and \smallskip

\noindent $\bullet$ $Z$ is the center of $D_p$. We will see in the Proposition \ref{propCEW} below that $Z$ is $H_p$-stable. (Note that we do not indicate the dependence of $Z$ on $p$ here, as the prime $p$ is fixed.)
\medskip

In the rest of the section, we provide results on invariants of Hopf actions on division algebras and reduction modulo prime powers, both of which we will use to prove Theorem \ref{main}. But first we discuss a version of Hensel's lemma needed for this work.

\subsection{Witt vectors and Hensel's lemma}

Let us recall here some basic facts from commutative algebra and algebraic geometry. \par \smallskip

Let $W_p$ be {\it the ring of Witt vectors of  $\overline{\Bbb F}_p$}; see \cite[Section II.6]{Se}. Let
$$
W_{m,p}:=W_p/(p^m)
$$
be {\it the $m$-truncated ring of Witt vectors of $\overline{\Bbb F}_p$}, which is an algebra over the ring $\Bbb Z/p^m\Bbb Z$. \footnote{In the sequel we consider several algebras over $\Bbb Z/p^m\Bbb Z$, and it is important to remember that they are not vector spaces over a field but only modules over the ring $\Bbb Z/p^m\Bbb Z$.} \par \smallskip

For sufficiently large primes $p$, we have that $R/(p)\ne 0$. Further, $R$ is unramified at $p$ so that the algebra
$R_p:=R\otimes_{\Bbb Z}\overline{\Bbb F}_p$ has no nonzero nilpotent elements. Thus,
$X_p:={\rm Spec}(R_p)$ is a nonempty algebraic variety over $\overline{\Bbb F}_p$. \par \smallskip

If $\psi: R\to \overline{\Bbb F}_p$
is a smooth point of $X_p$ (which is the case for generic $\psi$), then we have the following version of Hensel's lemma.

\begin{lemma}\label{Hensel} The point
$\psi$ can be lifted to a point $\psi_\infty$ of ${\rm Spec}(R)$ over the ring of Witt vectors $W_p$.
In other words, there exists an (in general, non-unique) homomorphism $\psi_\infty: R\to W_p$ whose reduction modulo $p$
gives $\psi$.
\end{lemma}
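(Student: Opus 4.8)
The plan is to realize $\psi_\infty$ as a compatible system of lifts modulo increasing powers of $p$ and then pass to the limit. Since $W_p=\varprojlim_m W_{m,p}$ is an inverse limit in the category of commutative rings, the covariant functor $\mathrm{Hom}(R,-)$ preserves it, giving a natural bijection
\[
\mathrm{Hom}(R,W_p)\;\cong\;\varprojlim_m \mathrm{Hom}(R,W_{m,p}).
\]
Thus it suffices to produce ring homomorphisms $\psi_m\colon R\to W_{m,p}$ with $\psi_1=\psi$ (under the identification $W_{1,p}=\overline{\Bbb F}_p$) and such that $\psi_{m+1}$ reduces to $\psi_m$ modulo $p^m$; the limit $\psi_\infty=\varprojlim_m\psi_m$ then reduces to $\psi$ modulo $p$, as required. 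The mechanism producing $\psi_{m+1}$ from $\psi_m$ will be the infinitesimal lifting property of smooth morphisms, applied to the surjections $W_{m+1,p}\twoheadrightarrow W_{m,p}$. These are square-zero extensions: the kernel is $p^mW_{m+1,p}$, whose square is $p^{2m}W_{m+1,p}=0$ because $2m\ge m+1$ for $m\ge 1$.

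Next I would verify that $\mathrm{Spec}(R)\to\mathrm{Spec}(\Bbb Z)$ is smooth at the point $\psi$. As $R\subseteq k$ is an integral domain of characteristic zero, it is torsion-free, hence flat, over $\Bbb Z$. The fiber over the prime $(p)$ is $\mathrm{Spec}(R/pR)$, and smoothness over the field $\Bbb F_p$ can be tested after the extension $\Bbb F_p\subseteq\overline{\Bbb F}_p$; by hypothesis the geometric fiber $X_p=\mathrm{Spec}(R_p)$ is smooth at $\psi$. By the fibral criterion for smoothness (a morphism locally of finite presentation is smooth at a point exactly when it is flat there and its fiber is smooth at that point), the structure morphism $\mathrm{Spec}(R)\to\mathrm{Spec}(\Bbb Z)$ is smooth at $\psi$, and hence smooth on a Zariski-open neighborhood $U\ni\psi$.

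I would then run the induction. Suppose $\psi_m\colon R\to W_{m,p}$ lifting $\psi$ has been built. Since the unique closed point of the local scheme $\mathrm{Spec}(W_{m,p})$ maps to $\psi\in U$ and $U$ is open, the morphism $\mathrm{Spec}(W_{m,p})\to\mathrm{Spec}(R)$ factors through $U$ (the preimage of $U$ is an open set containing the closed point of a local scheme, hence is everything). Applying formal smoothness of $U\to\mathrm{Spec}(\Bbb Z)$ to the square
\[
\begin{CD}
\mathrm{Spec}(W_{m,p}) @>{\psi_m}>> U\\
@VVV @VVV\\
\mathrm{Spec}(W_{m+1,p}) @>>> \mathrm{Spec}(\Bbb Z)
\end{CD}
\]
whose left vertical map is the closed immersion induced by the square-zero surjection $W_{m+1,p}\twoheadrightarrow W_{m,p}$, yields a diagonal filler $\mathrm{Spec}(W_{m+1,p})\to U$, i.e.\ a homomorphism $\psi_{m+1}\colon R\to W_{m+1,p}$ reducing to $\psi_m$. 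This completes the inductive step, and assembling the $\psi_m$ gives $\psi_\infty$.

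The only place where genuine geometry enters — and hence the main point to get right — is the passage from the hypothesis that the \emph{single} fiber $X_p$ is smooth at $\psi$ to smoothness of the whole arithmetic family $\mathrm{Spec}(R)\to\mathrm{Spec}(\Bbb Z)$ near $\psi$; this requires combining flatness over $\Bbb Z$ with the fibral criterion and being careful that ``smooth point of $X_p$'' is a statement about the geometric fiber over $\overline{\Bbb F}_p$. Everything after that is formal: the inductive lifting is the defining property of formally smooth morphisms, and the limit step is the universal property of the Witt ring. An equivalent, more hands-on route would present $R=\Bbb Z[t_1,\dots,t_N]/(f_1,\dots,f_r)$, read smoothness at $\psi$ as the maximal-rank condition on the Jacobian $\bigl(\partial f_j/\partial t_i\bigr)(\psi)$, and lift the coordinates of $\psi$ through a multivariate Hensel iteration using the $p$-adic completeness of $W_p$; this avoids citing the fibral criterion at the cost of explicit bookkeeping with the Jacobian, but is the same argument in coordinates.
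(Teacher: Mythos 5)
Your proof is correct, and it is worth noting that the paper itself offers \emph{no} proof of this lemma: it appears in the subsection ``Witt vectors and Hensel's lemma,'' which opens with ``Let us recall here some basic facts from commutative algebra and algebraic geometry,'' and the only citation nearby is to Serre's \emph{Local Fields} for the definition of $W_p$. So your argument supplies exactly the standard justification the authors leave implicit, and it does so correctly: (1) $R$ is torsion-free, hence flat, over $\Bbb Z$, and smoothness of the geometric fiber $X_p$ at $\psi$ descends to smoothness of the fiber $\mathrm{Spec}(R/pR)$ at $\ker\psi$, so the fibral criterion makes $\mathrm{Spec}(R)\to\mathrm{Spec}(\Bbb Z)$ smooth on an open neighborhood $U$ of $\ker\psi$; (2) formal smoothness of $U$ lifts $\psi_m$ step by step through the square-zero surjections $W_{m+1,p}\twoheadrightarrow W_{m,p}$ (your check that $p^{2m}=0$ in $W_{m+1,p}$ for $m\ge 1$ is the right one); (3) the universal property of $W_p=\varprojlim_m W_{m,p}$ assembles the compatible tower into $\psi_\infty$. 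You also correctly identify the only genuinely geometric input, namely the passage from smoothness of a single geometric fiber to smoothness of the arithmetic family near $\psi$, which is precisely why the lemma needs the smooth-point hypothesis (and why the lift is merely asserted to exist, non-uniquely: formal smoothness gives existence of fillers, not uniqueness, which would require \'etaleness). Two cosmetic remarks: since $W_{m,p}$ is Artinian local with nilpotent maximal ideal, $\mathrm{Spec}(W_{m,p})$ is a one-point scheme, so your argument that the map factors through $U$ is automatic; and your closing alternative (presenting $R$ by generators and relations and running a multivariate Hensel/Newton iteration against the Jacobian in the $p$-adically complete ring $W_p$) is indeed the same proof in coordinates, and arguably closer to what the authors' label ``Hensel's lemma'' suggests they had in mind.
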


Let us choose such a lifting $\psi_\infty$; then, reducing it modulo $p^m$, we obtain
homomorphisms
$$
\psi_m: R\to W_{m,p},
$$
such that $\psi_m$ equals the reduction of $\psi_{m+1}$
modulo $p^m$ for $m\ge 1$, and $\psi_1=\psi$.

\subsection{Auxiliary results on division algebras}

The following result collects \cite[Proposition 3.3(i)]{CEW} and a fact contained in the proof of \cite[Proposition 3.3(ii)]{CEW}. 

\begin{proposition} \label{propCEW}
Let $H$ be a finite dimensional Hopf algebra over an algebraically closed field $F$ of dimension $d$. Let $D$ be a division algebra over $F$ of degree $m$, that admits an action of $H$. If $\gcd(d!,m)=1$, then:
\begin{itemize}
\item[(i)] The center $Z$ of $D$ is $H$-stable, and $D=ZD^H$.
\item[(ii)] $H$ acts inner faithfully on $Z$. \qed
\end{itemize}
\end{proposition}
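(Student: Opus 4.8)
The plan is to reduce the whole statement to part (i), since part (ii) is then formal. Write $E := D^H$ for the algebra of invariants. First observe that $E$ is a division subalgebra: it is clearly an $F$-subalgebra, and if $a \in E$ is nonzero then applying any $h \in H$ to $a\,a^{-1} = 1$ gives $\sum_{(h)} (h_1 \cdot a)(h_2 \cdot a^{-1}) = \epsilon(h)1$; since $h_1 \cdot a = \epsilon(h_1)a$ this collapses to $a\,(h \cdot a^{-1}) = \epsilon(h)1$, so $h \cdot a^{-1} = \epsilon(h)a^{-1}$ and $a^{-1} \in E$.

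Granting (i), I would obtain (ii) as follows. Suppose the $H$-action on $Z$ factors through a quotient Hopf algebra $H/I$, i.e. $h \cdot z = 0$ for all $h \in I$ and $z \in Z$; I must show $I = 0$. Using $D = ZE$, any $\delta \in D$ may be written $\delta = \sum_j z_j e_j$ with $z_j \in Z$, $e_j \in E$, and for $h \in I$,
\[
h \cdot \delta = \sum_j \sum_{(h)} (h_1 \cdot z_j)(h_2 \cdot e_j) = \sum_j \sum_{(h)} (h_1 \cdot z_j)\,\epsilon(h_2)\,e_j = \sum_j (h \cdot z_j)\,e_j = 0,
\]
where I used $h_2 \cdot e_j = \epsilon(h_2)e_j$, the counit axiom $\sum_{(h)} h_1\,\epsilon(h_2) = h$, and $h \in I$. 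Thus $I$ annihilates all of $D$, and since the $H$-action on $D$ is inner faithful, $I = 0$; this is (ii).

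The crux is therefore (i). The first point, $H$-stability of $Z$, is where $\gcd(d!,m) = 1$ must be used. The naive argument fails: applying $h$ to $za = az$ (with $z \in Z$, $a \in D$) yields only $\sum_{(h)}(h_1 \cdot z)(h_2 \cdot a) = \sum_{(h)}(h_1 \cdot a)(h_2 \cdot z)$, which does not isolate $h \cdot z$ once $H$ is non-cocommutative, and non-semisimplicity of $H$ rules out a Reynolds operator to average with. (By contrast, a single derivation does preserve $Z$, by a one-line commutator computation, so the obstruction is genuinely the failure of cocommutativity.) My plan is to pass to the split form: as $F$ is algebraically closed, over a splitting field $L \supseteq Z$ one has $D \otimes_Z L \cong M_m(L)$, and the inner structure of $D$ together with the $H$-action is governed by data valued in $\mathrm{PGL}_m$ and in finite group quotients attached to $H$. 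The obstructions both to $Z$-stability and to $D = ZE$ are then torsion classes whose orders divide simultaneously a power of $m$ (the degree of $D$) and a power of $d!$ (reflecting that $D$ is finite over the division ring $E$ with rank controlled by $\dim_F H = d$, and that any finite group produced by the $H$-action has order dividing $d!$); the coprimality $\gcd(d!,m) = 1$ then forces them to vanish. For the equality $D = ZE$ specifically, $ZE$ is an $H$-stable division subalgebra of $D$ containing $Z$, and I would show, via the double centralizer theorem and the finiteness of $D$ over $E$, that its index in $D$ divides both a power of $m$ and a power of $d$, whence $\gcd(d!,m)=1$ gives $ZE = D$.

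The main obstacle is thus the rigorous proof of $H$-stability of $Z$: one must genuinely control the finite-group and cohomological data of the $H$-action on the central simple algebra $D$ and show that the failure of center-preservation is measured by a quantity simultaneously dividing $d!$ and $m$. Everything else — that $E$ is a division ring, the deduction of (ii) from (i), and the reduction of $D = ZE$ to a degree count — is comparatively formal, so the entire weight of the coprimality hypothesis falls on this single step.
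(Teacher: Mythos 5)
Your formal reductions are fine: the check that $D^H$ is a division subalgebra is correct, and, granting (i), your derivation of (ii) is exactly the intended one (it silently uses the hypothesis, implicit in the proposition and in how the paper applies it to $D_p$, that the $H$-action on $D$ itself is inner faithful). But the proposal contains no proof of (i); it contains the observation that (i) is the hard part, together with an unsubstantiated plan. The assertions that the obstructions to $Z$-stability and to $D=ZD^H$ are ``torsion classes'' attached to $\mathrm{PGL}_m$-valued data, that their orders divide a power of $m$ and a power of $d!$, and that ``any finite group produced by the $H$-action has order dividing $d!$'', are never backed by a construction: you do not say where these classes live, why they are torsion of the stated orders, or why their vanishing yields stability of $Z$. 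Since, as you say yourself, the entire weight of the coprimality hypothesis falls on this step, this is a genuine gap rather than a detail.

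Moreover, the route you sketch has the logical order backwards, which is why it looks so hard. The argument this paper relies on (it quotes the proposition from \cite[Proposition 3.3]{CEW}, and all the needed ingredients appear in the present paper) establishes $D=ZD^H$ \emph{first}, by pure degree counting, and then obtains stability of $Z$ from the ``naive'' computation you dismissed. Concretely: by \cite[Corollary 2.3]{BCF} (the same bound used in Lemma~\ref{locali}), $[D:D^H]\le d$. The subalgebra $ZD^H$ is a division algebra containing the central subfield $Z$, hence finite dimensional over its center, and $[D:ZD^H]\le [D:D^H]\le d$. By Lemma~\ref{divalg}(i), $r:=\deg(D)/\deg(ZD^H)$ is an integer dividing $[D:ZD^H]\le d$, so $r\mid d!$; also $r\mid m$ trivially, so $r=1$ by coprimality. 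Lemma~\ref{divalg}(ii) then gives $D=Z\cdot(ZD^H)=ZD^H$. Consequently the centralizer of $D^H$ in $D$ is exactly $Z$, since anything commuting with $D^H$ commutes with $ZD^H=D$. Finally, for $z\in Z$, $a\in D^H$, $h\in H$, applying $h$ to $za=az$ and using that invariance of $a$ collapses the coproduct, $\sum (h_1\cdot z)(h_2\cdot a)=(h\cdot z)a$ and $\sum (h_1\cdot a)(h_2\cdot z)=a(h\cdot z)$, one gets $(h\cdot z)a=a(h\cdot z)$; hence $h\cdot z$ centralizes $D^H$, i.e.\ $h\cdot z\in Z$. (This is the same trick as in the last paragraph of the proof of Lemma~\ref{central}.) So no splitting fields, cocycles, or cohomological obstruction theory are needed: the coprimality hypothesis is spent entirely on the elementary degree count, and the commutator computation only ``fails'' if you test $z$ against arbitrary elements of $D$ rather than against invariants.
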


Return to the notation set at the beginning of the section. The next three results pertain to the quotient field $\mathcal{Q}_A$ of $A$.

\begin{lemma}\label{locali1}
Let $S$ be a left Ore domain, $\QQ_S$ its
division ring of fractions, and let $C\subset \QQ_S$ be a division subalgebra such that $CS$ is finite dimensional as a left $C$-vector space. Then, $CS=\QQ_S$.
In particular, this holds if $\QQ_S$ is finite dimensional as a left $C$-vector space.
\end{lemma}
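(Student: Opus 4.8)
The plan is to prove that $V := CS$, which by hypothesis is a finite-dimensional left $C$-vector space, is in fact closed under right multiplication by \emph{every} element of $\QQ_S$; since $1 \in S \subseteq V$, this closure forces $\QQ_S = 1 \cdot \QQ_S \subseteq V \QQ_S \subseteq V$ and hence $CS = \QQ_S$. So the whole argument reduces to upgrading right-multiplication stability of $V$ from $S$ to all of $\QQ_S$. First I would record the cheap facts: $V = CS$ is the left $C$-span of $S$ in $\QQ_S$, so it is a left $C$-vector space containing $S$ (take coefficient $1 \in C$), in particular $1 \in V$; and $V$ is stable under right multiplication by $S$, since for $s \in S$ one has $(CS)s = C(Ss) \subseteq CS = V$ because $Ss \subseteq S$.

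The crux, where finite-dimensionality is used, is to promote this to stability under $s^{-1}$. Fix a nonzero $s \in S$ and consider the right-multiplication map $\rho_s \colon V \to V$, $v \mapsto vs$, which is well defined by the previous step and is left $C$-linear because $\rho_s(cv) = cvs = c\,\rho_s(v)$. Since $\QQ_S$ is a division ring and $s \neq 0$, the map $\rho_s$ is injective; being an injective $C$-linear endomorphism of the finite-dimensional left $C$-vector space $V$, it is therefore surjective. Surjectivity says exactly that for every $v \in V$ there is $w \in V$ with $ws = v$, i.e. $w = vs^{-1} \in V$, so that $Vs^{-1} \subseteq V$.

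I would then assemble these facts using the left Ore hypothesis. Every $q \in \QQ_S$ can be written as a left fraction $q = s^{-1}a$ with $s, a \in S$ and $s \neq 0$. For any $v \in V$ this gives $vq = (vs^{-1})a$, where $vs^{-1} \in V$ by the previous paragraph and hence $(vs^{-1})a \in Va \subseteq V$. Thus $Vq \subseteq V$ for all $q \in \QQ_S$, so $V$ is a right $\QQ_S$-submodule of $\QQ_S$; combined with $1 \in V$ this yields $\QQ_S \subseteq V \subseteq \QQ_S$, i.e. $CS = \QQ_S$. The final assertion is immediate: if $\QQ_S$ is finite-dimensional as a left $C$-vector space, then its $C$-subspace $CS$ is as well, so the main hypothesis is satisfied.

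I expect the only genuine subtlety to be the sidedness bookkeeping. One must act by \emph{right} multiplication, so that stability of $V$ under $S$ is automatic from $Ss \subseteq S$, while \emph{simultaneously} exploiting the left-fraction form $s^{-1}a$ supplied by the left Ore condition; the symmetric argument using left multiplication breaks down because $sC$ need not lie in $CS$. The one nontrivial input is the finite-dimensionality of $V$, which is precisely what converts injectivity of $\rho_s$ into surjectivity and thereby places the inverses $s^{-1}$ inside $V$.
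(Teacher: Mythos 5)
Your proof is correct; the only nontrivial inputs you use --- finite dimensionality of $CS$ over $C$, right stability of $CS$ under $S$, injectivity of right multiplication in a division ring, and the left-fraction form $q=s^{-1}a$ --- are all available exactly as you invoke them, and the rank--nullity step works verbatim for left vector spaces over the division ring $C$. The overall skeleton matches the paper's (both reduce to placing $s^{-1}$ inside $CS$ and then use $q=s^{-1}a$ together with $CS\cdot S\subseteq CS$), but the key mechanism differs. The paper argues element-wise: since $CS$ is finite dimensional over $C$, the powers $1,g,g^2,\dots$ of a nonzero $g\in S$ are left linearly dependent, giving a relation $a_0g^r+\dots+a_r=0$ with $a_i\in C$; after normalizing so that $a_r\neq 0$ (multiplying on the right by a negative power of $g$), one solves explicitly $g^{-1}=-a_r^{-1}\sum_{i=0}^{r-1}a_ig^{r-i-1}\in CS$. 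You instead package the finite dimensionality into the operator-theoretic fact that the injective left $C$-linear map $\rho_s\colon v\mapsto vs$ on $V=CS$ must be surjective, which yields $Vs^{-1}\subseteq V$ in one stroke. Your route avoids the paper's degree-normalization step and the mild bookkeeping of noncommutative polynomial coefficients, and it gives a slightly stronger intermediate conclusion ($CS$ is a right $\QQ_S$-submodule, not just a set containing each $g^{-1}f$); the paper's route is more explicit, exhibiting $g^{-1}$ as a concrete left $C$-combination of powers of $g$. The two are, of course, cousins: ``injective implies surjective'' is the abstract shadow of ``satisfies a polynomial relation with invertible constant term.''
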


\begin{proof}
Any element of $\QQ_S$ can be written as $g^{-1}f$,
for $f,g\in S$, so it suffices to show that $g^{-1}\in CS$.
To this end, note that since $CS$ is finite dimensional over $C$,
the element $g$ must satisfy a polynomial equation over $C$:
$$
a_0g^r+a_1g^{r-1}+\dots+a_r=0, \quad \textrm{with} \ a_i\in C.
$$
Without loss of generality, we may assume that $a_r\ne 0$ (otherwise we can multiply on the right by an appropriate negative power of $g$). Hence, \linebreak $g^{-1}=-a_r^{-1}\sum_{i=0}^{r-1}a_ig^{r-i-1}\in CS$.
\end{proof}

Recall that the $H$-action on $A$ extends uniquely to the quotient division algebra $\QQ_A$ of $A$ by \cite[Theorem 2.2]{SVO}. We apply Lemma \ref{locali1} to $S:=A $ and $C:=\QQ_A^H$.

\begin{lemma}\label{locali}
One has $\QQ_A^HA=\QQ_A$.
\end{lemma}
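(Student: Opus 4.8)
The plan is to verify the two hypotheses of Lemma~\ref{locali1} for the choice $S := A$ and $C := \QQ_A^H$, after which the assertion $\QQ_A^H A = \QQ_A$ drops out. That $C$ is a division subalgebra is a one-line Sweedler computation: for $0 \ne u \in \QQ_A^H$, applying $h$ to $u u^{-1} = 1$ and using $h \cdot u = \epsilon(h)u$ gives $u\,(h \cdot u^{-1}) = \epsilon(h)\,1$, whence $h \cdot u^{-1} = \epsilon(h)\,u^{-1}$ and $u^{-1} \in \QQ_A^H$. The real content is thus the finiteness
$$
[\,\QQ_A : \QQ_A^H\,]_{\ell} \le \dim_k H < \infty,
$$
for then the ``in particular'' clause of Lemma~\ref{locali1} applies and yields $\QQ_A^H A = CS = \QQ_A$.

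To prove finiteness I would view $D := \QQ_A$ as a left $\QQ_A^H$-vector space and let $B \subseteq \operatorname{End}({}_{\QQ_A^H}D)$ be the subalgebra generated by all right multiplications $\rho_d\colon x \mapsto xd$ (for $d \in D$) together with all action operators $\lambda_h\colon x \mapsto h \cdot x$ (for $h \in H$); both families are left-$\QQ_A^H$-linear, by the counit axiom. The smash-product relation $\lambda_h \rho_d = \sum \rho_{h_{(2)}\cdot d}\,\lambda_{h_{(1)}}$ lets one push all $\rho$'s to the left, so every element of $B$ has the form $\sum_{j} \rho_{d_j}\lambda_{h_j}$ with $d_j \in D$ and $\{h_j\}$ a fixed $k$-basis of $H$. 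Two observations make Jacobson density available: first, $D$ is a \emph{simple} $B$-module, since any $B$-submodule is in particular a right $D$-subspace of the division ring $D$, hence $0$ or $D$; second, $\operatorname{End}_B(D)$ consists exactly of left multiplications by elements of $\QQ_A^H$ — an endomorphism commuting with every $\rho_d$ is left multiplication by some $c = f(1)$, and commuting additionally with every $\lambda_h$ forces $c \in \QQ_A^H$.

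The main obstacle, and the step carrying the weight of the lemma, is the dimension count that converts density into the bound $\dim_k H$. By the Jacobson density theorem, $B$ acts densely on $D$ as a left $\QQ_A^H$-vector space; so if $v_0,\dots,v_n \in D$ were left-$\QQ_A^H$-independent with $n = \dim_k H$, the evaluation $b \mapsto (b\,v_i)_{0\le i\le n}$ would map $B$ onto $D^{\,n+1}$. But writing $b = \sum_{j} \rho_{d_j}\lambda_{h_j}$ shows $b\,v_i = \sum_j (h_j \cdot v_i)\,d_j$, so this evaluation factors through the \emph{right} $D$-linear map $D^{\,n} \to D^{\,n+1}$, $(d_j) \mapsto (\sum_j (h_j\cdot v_i)\,d_j)_i$, whose image has right $D$-dimension at most $n$ and is therefore a proper subspace of $D^{\,n+1}$ — contradicting surjectivity. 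Hence $[\QQ_A:\QQ_A^H]_\ell \le \dim_k H$. Notably this argument is uniform in $H$: it never averages over an integral, so the non-semisimple case presents no extra difficulty, and the Ore property of $A = \bold A_n(k)$ (needed to form $\QQ_A$ and extend the action) is the only input about the Weyl algebra itself. With finiteness in hand, Lemma~\ref{locali1} completes the proof.
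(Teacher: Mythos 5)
Your proof is correct, and its overall skeleton matches the paper's: establish $[\QQ_A:\QQ_A^H]\le \dim_k H$ and then invoke Lemma~\ref{locali1} with $S=A$, $C=\QQ_A^H$. The difference is in how the dimension bound is obtained. The paper disposes of it in one line by citing \cite[Corollary 2.3]{BCF} (restated as \cite[Lemma 3.2]{CEW}), whereas you re-derive it from scratch: you form the subalgebra $B$ of left-$\QQ_A^H$-linear endomorphisms of $D=\QQ_A$ generated by right multiplications and the action operators (in effect a smash-product action, with sides flipped from the usual convention), check that $D$ is a simple $B$-module with $\operatorname{End}_B(D)=\QQ_A^H$, and then play Jacobson density against the fact that every element of $B$ is a sum of at most $\dim_k H$ terms $\rho_{d_j}\lambda_{h_j}$, so its image on $\dim_k H+1$ independent vectors lands in a proper right $D$-subspace. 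This is essentially Bergen--Cohen--Fischman's own argument, so what you have produced is a self-contained proof of the cited input rather than a new route; it makes explicit that no semisimplicity of $H$ is used (which matters for this paper, since $H$ is not assumed semisimple) and that the only property of the Weyl algebra needed is the Ore condition together with the extension of the action to $\QQ_A$ via \cite[Theorem 2.2]{SVO}. Your preliminary check that $\QQ_A^H$ is a division subalgebra, which the paper leaves implicit, is also correct and is needed for the hypothesis of Lemma~\ref{locali1}.
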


\begin{proof}
By \cite[Corollary 2.3]{BCF} (restated in \cite[Lemma 3.2]{CEW}),
the dimension of $\QQ_A$ over the division ring of invariants $\QQ_A^H$
(on either side) is less or equal than $\dim H$. The result now follows from Lemma \ref{locali1}.
\end{proof}

We next see that Lemma \ref{locali} allows us to choose a convenient finite spanning set
for $\QQ_A$ over $\QQ_A^H$ contained in $A$.  For a monomial $u$ in $x_i,y_i$, let $\deg(u)$ be the degree of $u$.

\begin{lemma} \label{express}
There exists a positive integer $N$ so that
we can express a monomial $v$ of any degree in $x_i, y_i$  as
$$
v=\sum_{u:\ \deg(u)\le N} b_{v,u} u,
$$ where
the $u$ are monomials in $x_i$, $y_i$, and the $b_{v,u}$ are noncommutative polynomials in
the finite set of elements $b_{w,u} \in \QQ_A^H$,
with $w$ having degree $N+1$.
\end{lemma}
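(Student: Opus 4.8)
The plan is to first produce the bound $N$ together with the distinguished finite family $b_{w,u}$ by a dimension‑stabilization argument, and then to derive the general expression by induction on $\deg(v)$. For $M\ge 0$ let $A_{\le M}\subseteq A$ be the $k$-span of all monomials in the $x_i,y_i$ of degree $\le M$, and set $V_M:=\QQ_A^H\cdot A_{\le M}$, the left $\QQ_A^H$-subspace of $\QQ_A$ they generate. These form an increasing chain $V_0\subseteq V_1\subseteq\cdots$ with $\bigcup_M V_M=\QQ_A^H A=\QQ_A$ by Lemma \ref{locali}. Since $\dim_{\QQ_A^H}\QQ_A\le \dim H<\infty$ (as recalled in the proof of Lemma \ref{locali}), this chain stabilizes, so there is an $N$ with $V_N=V_{N+1}=\cdots=\QQ_A$. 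In particular every monomial $w$ of degree exactly $N+1$ lies in $V_N$ and can be written $w=\sum_{u:\,\deg(u)\le N}b_{w,u}\,u$ with $b_{w,u}\in\QQ_A^H$; the finitely many such $b_{w,u}$ (over the finitely many $w$ of degree $N+1$ and $u$ of degree $\le N$) are the elements in which all coefficients will be expressed.

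Next I would prove the statement by induction on $d=\deg(v)$. If $d\le N$ there is nothing to do: $v$ is itself a monomial of degree $\le N$, with coefficient the constant polynomial $1$. For $d>N$, write $v=v'z$ with $z\in\{x_i,y_i\}$ and $\deg(v')=d-1$. By the inductive hypothesis $v'=\sum_{u:\,\deg(u)\le N}b_{v',u}\,u$, each $b_{v',u}$ a noncommutative polynomial in the $b_{w,u}$, whence $v=\sum_u b_{v',u}\,(uz)$. Normal‑ordering each product $uz$ via the relations $[y_i,x_j]=\delta_{ij}$ writes it as an \emph{integer} combination of monomials of degree $\le N+1$, the top degree being $\deg(u)+1\le N+1$. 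Substituting and collecting, $v$ becomes a sum over monomials $u'$ of degree $\le N+1$ whose coefficients are left products of the $b_{v',u}$ with integers, hence again noncommutative polynomials in the $b_{w,u}$. Finally, for each occurring monomial $u'=w$ of degree exactly $N+1$ I replace it by $\sum_{u''}b_{w,u''}\,u''$; since its coefficient is already such a polynomial and each $b_{w,u''}$ is one of the generators, the resulting coefficient of $u''$ (of degree $\le N$) is still a noncommutative polynomial in the $b_{w,u}$. This produces the required expression for $v$ and closes the induction.

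The only point needing care is the bookkeeping in the inductive step: keeping the invariants on the left throughout—which is legitimate precisely because the structure constants produced by normal‑ordering are integers, hence central—and checking that every coefficient that arises stays a noncommutative polynomial in the fixed finite set $\{b_{w,u}\}$ rather than an uncontrolled element of $\QQ_A^H$. I expect no serious obstacle here; the essential content is the finiteness of $\dim_{\QQ_A^H}\QQ_A$ supplying the degree bound $N$, after which the claim is a direct degree induction.
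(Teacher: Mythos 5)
Your proposal is correct and follows essentially the same route as the paper: Lemma \ref{locali} plus the bound $[\QQ_A:\QQ_A^H]\le\dim H$ from \cite[Corollary 2.3]{BCF} yields the degree bound $N$ and the relations $w=\sum_{u}b_{w,u}u$ for $\deg(w)=N+1$, after which the general case follows by applying these relations repeatedly. Your chain-stabilization argument for producing $N$ and your explicit degree induction (with the normal-ordering bookkeeping) are just more detailed versions of what the paper does in two sentences.
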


\begin{proof}
Let $N$ be a positive integer such that the monomials in $x_1,\dots,x_n,$ $y_1,\dots,y_n$
of degree $\le N$ span $\QQ_A$ over $\QQ_A^H$ as a left vector space.
Such $N$ exists, since $\QQ_A$ is finite dimensional over $\QQ_A^H$ by \cite[Corollary~2.3]{BCF},
and by Lemma \ref{locali} is spanned over $\QQ_A^H$ by $A$. Then, for each monomial
$w$ in $x_i,y_i$ of degree $N+1$, we have
\begin{equation}\label{spanset}
w=\sum_{u:\ \deg(u)\le N} b_{w,u}u,
\end{equation}
where the $u$ are monomials in $x_i$, $y_i$, and $b_{w,u}\in \QQ_A^H$.
By applying \eqref{spanset} repeatedly, we obtain the result for $v$ of any degree; namely, $b_{v,u}$ is a noncommutative polynomial in the
finite set of elements $\{b_{w,u}\}$, $\deg(w)=N+1$, and $\deg(u)\le N$.
\end{proof}

We will also need the following lemma from the theory of  division algebras.
The lemma is well known, but we provide a proof for reader's convenience.

\begin{lemma}\label{divalg}
Let $D_1\subset D_2$ be division algebras each finite dimensional over its center, 
with $[D_2:D_1]<\infty$, and let the degree of $D_i$ be $d_i$ for $i=1,2$. Let $Z_i$ be the center of $D_i$. Then:
\begin{enumerate}
\item[(i)] $d_2/d_1$ is an integer dividing $[D_2:D_1]$;
\item[(ii)] If $d_1=d_2$, then $D_2=Z_2D_1 \cong Z_2\otimes_{Z_1}D_1$.
\end{enumerate}
\end{lemma}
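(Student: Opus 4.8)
The plan is to reduce both parts to a single construction: the compositum field $F := Z_1 Z_2$ formed inside $D_2$, together with the division subalgebra $D_1 F$ it generates. Everything then follows from the double centralizer theorem and elementary tower laws for relative dimensions.

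First I would record the commutation facts that make $F$ and $D_1F$ well behaved. Since $Z_2$ is central in $D_2$ it commutes with $Z_1$, so $Z_1 Z_2$ is a commutative subring of the division ring $D_2$; being a finite-dimensional domain over a field, it is a field $F$ containing both $Z_1$ and $Z_2$. Moreover $F$ commutes elementwise with $D_1$: the factor $Z_2$ is central in $D_2 \supseteq D_1$, while $Z_1 = Z(D_1)$ commutes with $D_1$ by definition. Consequently the multiplication map $D_1 \otimes_{Z_1} F \to D_2$, $a \otimes f \mapsto af$, is a well-defined homomorphism of $F$-algebras; since $D_1 \otimes_{Z_1} F$ is central simple over $F$ (base change of the central simple $Z_1$-algebra $D_1$ along the field extension $F/Z_1$), it is simple, so the map is injective. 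Its image $D_1 F$ is therefore a division subalgebra of $D_2$ with center $F$ and $[D_1F : F] = d_1^2$, i.e. of degree $d_1$.

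For part (i) I would apply the double centralizer theorem to the simple subalgebra $B := D_1 F$ of the central simple $Z_2$-algebra $D_2$ (note $Z_2 \subseteq F \subseteq B$). Writing $C := C_{D_2}(B)$ and $f := [F : Z_2]$, this gives that $C$ is central simple over $F$ with $[B : Z_2]\,[C : Z_2] = [D_2 : Z_2]$, that is $(d_1^2 f)\,[C:Z_2] = d_2^2$. Hence $[C:Z_2] = (d_2/d_1)^2/f$ is an integer, forcing $(d_2/d_1)^2 \in \mathbb{Z}$ and therefore $d_2/d_1 \in \mathbb{Z}$. Moreover $[C:F] = (d_2/d_1)^2/f^2$ is a nonnegative integer, so $f \mid (d_2/d_1)$, and from $[C:Z_2] = (d_2/d_1)\cdot\bigl((d_2/d_1)/f\bigr)$ we get $(d_2/d_1) \mid [C:Z_2]$. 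Finally, a tower-law count of left dimensions over $Z_1$ through the two chains $Z_1 \subseteq D_1 \subseteq D_2$ and $Z_1 \subseteq F \subseteq D_2$ yields $[D_2:D_1] = [C:Z_2]\,[F:Z_1]$, whence $(d_2/d_1) \mid [C:Z_2] \mid [D_2:D_1]$. For part (ii), assuming $d_1 = d_2 =: d$, I would compare dimensions over $Z_2$: from $D_1 F \subseteq D_2$ we have $d^2 f = [D_1 F : Z_2] \le [D_2 : Z_2] = d^2$, forcing $f = 1$, i.e. $F = Z_2$ and hence $Z_1 \subseteq Z_2$. Then $[D_1 F : Z_2] = d^2 = [D_2 : Z_2]$ shows $D_1 F = D_2$, and since $F = Z_2$ the isomorphism of the second paragraph reads $D_2 = Z_2 D_1 \cong Z_2 \otimes_{Z_1} D_1$, as desired.

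The main obstacle, and the reason the centers must be handled carefully, is that $Z_1$ and $Z_2$ are a priori incomparable, so $Z_2 \otimes_{Z_1} D_1$ is not even defined at the outset; the compositum $F$ is precisely the device that repairs this, and the containment $Z_1 \subseteq Z_2$ needed in part (ii) is not assumed but forced by the degree hypothesis. The only genuinely delicate point in part (i) is upgrading the rationality of $d_2/d_1$ to the two divisibilities $d_2/d_1 \in \mathbb{Z}$ and $d_2/d_1 \mid [D_2:D_1]$, which is where the perfect-square constraint $[C:F]=(d_2/d_1)^2/f^2$ and the identity $[D_2:D_1]=[C:Z_2]\,[F:Z_1]$ do the real work. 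One should also fix a single side (say left) for all relative dimensions so that the tower laws apply unambiguously.
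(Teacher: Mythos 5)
Your proof is correct, but the key step is carried out by a genuinely different tool than in the paper. Both arguments revolve around the same composite algebra: your $D_1F$ (with $F=Z_1Z_2$) is exactly the paper's $Z_2D_1$, and both proofs identify it with a base change of $D_1$ (you use $D_1\otimes_{Z_1}F$, the paper uses $K\otimes_{Z_1}D_1$ where $K$ is the center of $Z_2D_1$, treated abstractly; your construction in fact shows $K=Z_1Z_2$, a precision the paper never needs). The divergence is in how integrality of $d_2/d_1$ and the divisibility $[F:Z_2]\mid d_2/d_1$ are obtained. The paper takes a maximal subfield $L$ of $Z_2D_1$ (so $[L:K]=d_1$), embeds it in a maximal subfield $L'$ of $D_2$ (so $[L':Z_2]=d_2$), and reads off $d_2=[L':L]\,d_1 m$ in one stroke, with no arithmetic lemmas. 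You instead invoke the double centralizer theorem to get that $[C:Z_2]=(d_2/d_1)^2/f$ and $[C:F]=(d_2/d_1)^2/f^2$ are integers, and then finish with elementary number theory (a rational with integer square is an integer; $f^2\mid n^2$ forces $f\mid n$). These are equivalent in substance --- indeed $[C:Z_2]=[D_2:Z_2D_1]$ by the centralizer dimension formula, so your identity $[D_2:D_1]=[C:Z_2][F:Z_1]$ is the paper's factorization $[D_2:D_1]=[D_2:Z_2D_1][Z_2D_1:D_1]$ in disguise --- but the dependencies differ: the maximal-subfield facts the paper uses (existence of a maximal subfield through a given one, and the degree formula $[L':Z_2]=d_2$) are themselves usually derived from the centralizer theorem, so your route is arguably more self-contained, at the cost of a slightly longer arithmetic finish. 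Part (ii) is essentially identical in the two proofs: a dimension count over $Z_2$ forces $F=Z_2$ (equivalently $m=1$), hence $Z_1\subseteq Z_2$ and $D_2=Z_2D_1\cong Z_2\otimes_{Z_1}D_1$.
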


\begin{proof}
(i) We have $[D_2:D_1]=[D_2:Z_2D_1][Z_2D_1:D_1]$.
The center of $Z_2D_1$ is some field $K$ containing $Z_1$ and $Z_2$. Thus $KD_1=Z_2D_1$. Moreover, we have $[Z_2D_1:K]=d_1^2$ because $K \otimes_{Z_1} D_1 \cong KD_1$. Let $[K:Z_2]=m$.
Then,
$$
\begin{array}{ll}
d_2^2 & = [D_2:Z_2] \vspace{2pt} \\
      & = [D_2 : Z_2D_1][Z_2D_1:Z_2] \vspace{2pt} \\
      & = [D_2 : Z_2D_1][Z_2D_1:K][K:Z_2] \vspace{2pt} \\
      & = [D_2 : Z_2D_1]d_1^2m.
\end{array}
$$
So $[D_2:Z_2D_1]=d_2^2/d_1^2m$. \par \smallskip

Let $L$ be a maximal subfield of $Z_2D_1$. Then $[L:K]=d_1$ and consequently  $[L:Z_2]=[L:K][K:Z_2]=d_1m$. Now, $L$ is contained in a maximal subfield $L'$ of $D_2$, with $[L':Z_2]=d_2$. So,
$d_2=[L':L]d_1m$. Thus, $d_1m$ divides $d_2$. Hence, $d_2/d_1$ is an integer dividing
$[D_2:Z_2D_1]=d_2^2/d_1^2m$, which in turn divides $[D_2:D_1]$.  \par \medskip

(ii) If $d_1=d_2$, then $[D_2 : Z_2D_1]=1$, and so $D_2=Z_2D_1$. Thus, $Z_1\subset Z_2$ and $D_2 \cong Z_2\otimes_{Z_1}D_1$.
\end{proof}

\subsection{Reduction modulo prime powers}

Now we want to reduce the action of $H$ on $A$ modulo prime powers.
This is done in a standard way, as one does for any kind of ``finite" linear algebraic structures.
The process is similar to reduction modulo a prime described in \cite[Section 2]{CEW},
but somewhat more complicated. \par \smallskip

Recall from \cite[Lemma 2.2]{CEW} that the algebras $H$, $A$ and the action of $H$ on $A$ are defined over some finitely generated subring $R\subset k$. We have the corresponding $R$-orders $H_R$ and $A_R$ and the restricted action of $H_R$ on $A_R$. For a sufficiently large prime $p$, fix a smooth point $\psi\in X_p$ and its lifting $\psi_\infty$ to $W_p$, which gives
rise to the maps $\psi_m$, $m\ge 1$ (see Lemma \ref{Hensel}). Now we define {\it reductions of $H$ and $A$ modulo $p^m$} by the formulas:
\medskip

\noindent $\bullet$ $H_{p^m}=H_R\otimes_R W_{m,p}$, \ and
\smallskip

\noindent $\bullet$ $A_{p^m}=A_R\otimes_R W_{m,p}$. \par \smallskip

\noindent Thus, in the notation, we suppress the dependence of these reductions on the choice of $\psi_m$.
Note that $A_{p^m}=\bold A_n(W_{m,p})$.

Similar to \cite[Proposition~2.4]{CEW}, $H_{p^m}$ acts on $A_{p^m}$  by tensoring the action of $H_R$ on $A_R$ with $W_{m,p}$ over $R$ using $\psi_m$.

\subsection{The ring $D_{p^m}$ and its center $Z_m$.}\label{centerst}
We define: \medskip

\noindent $\bullet$ $D_{p^m}$ as the full localization of $A_{p^m}$, \ and
\smallskip

\noindent $\bullet$ $Z_m$ as the center of $D_{p^m}$.

\medskip

\noindent The algebra $D_{p^m}$ is obtained from $A_{p^m}$ by inverting all elements which are not zero divisors, i.e., not contained in the ideal
$(p)$. Thus, $D_p$ is the noncommutative field of quotients of the Weyl algebra $A_p=\bold A_n(\overline{\Bbb F}_p)$ (as in \cite{CEW}). Further,  $D_{p^m}$ can be visualized as follows: its associated graded algebra under the filtration by powers of $p$ is ${\rm gr}(D_{p^m})=D_p[z]/(z^m)$. It is therefore easy to see that $D_{p^m}$ is an Artinian ring. \par \smallskip

Further, observe that $Z_m$ contains the ring of rational functions

\medskip

\noindent $\bullet$ $K_m:=W_{m,p}(x_i^{p^m},y_i^{p^m} : 1\le i\le n)$.

\medskip

\noindent (By a rational function we mean a fraction $P/Q$, where $P,Q$ are polynomials, and
$Q$ has a nonzero reduction modulo $p$). Moreover, $D_{p^m}$ is a free module
over $K_m$ with basis consisting of ordered monomials $(\prod_i x_i^{\alpha_i})(\prod_i y_i^{\beta_i})$,
where $\alpha=(\alpha_i)$ and $\beta=(\beta_i)$ are multi-indices, such that $0\le \alpha_i,\beta_i\le p^m-1$; the rank of this module is $p^{2nm}$. \par \smallskip

The structure of $Z_m$ is described by the following result.

\begin{lemma}\label{centerstr}
The center $Z_m$ of $D_{p^m}$ is spanned over $K_m$
by the elements
$$
v_{\alpha,\beta}:=p^{m-s(\alpha,\beta)}\Big(\prod_i x_i^{\alpha_i}\Big)\Big(\prod_i y_i^{\beta_i}\Big), \quad \text{for }\
0\le \alpha_i,\beta_i\le p^m-1,\ s(\alpha,\beta)>0,
$$
where $s(\alpha,\beta)$ is the largest integer such that $p^{s(\alpha,\beta)}$ divides ${\rm gcd}(\alpha_i,\beta_i)$ for $i=1,\dots,n$.
Moreover, the defining relations of $Z_m$ as a $K_m$-module on these generators
are $p^{s(\alpha,\beta)}v_{\alpha,\beta}=0$.
\end{lemma}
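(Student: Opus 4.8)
The plan is to reduce the statement to a coefficient computation over the central subring $K_m$, using the two facts already available to us: that $K_m \subseteq Z_m$ (so $K_m$ is central in $D_{p^m}$), and that the ordered monomials $u_{\alpha,\beta} := (\prod_i x_i^{\alpha_i})(\prod_i y_i^{\beta_i})$ with $0 \le \alpha_i, \beta_i \le p^m-1$ form a free $K_m$-basis of $D_{p^m}$. Because $K_m$ is central, an element of $D_{p^m}$ lies in $Z_m$ exactly when it commutes with every generator $x_j$ and $y_j$. The first step is therefore to record these commutators on the basis. Since $[x_j,y_j]=-1$ and $[y_j,x_j]=1$ are central, $\operatorname{ad}(x_j)$ and $\operatorname{ad}(y_j)$ act on the monomials as the derivations $-\partial/\partial y_j$ and $\partial/\partial x_j$, yielding the clean formulas
\[
[x_j, u_{\alpha,\beta}] = -\beta_j\, u_{\alpha,\beta-\delta_j}, \qquad [y_j, u_{\alpha,\beta}] = \alpha_j\, u_{\alpha-\delta_j,\beta},
\]
where $\delta_j$ is the $j$-th standard multi-index and the right-hand side is read as $0$ when the relevant exponent is $0$.

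Next I would write a general element as $z=\sum_{\alpha,\beta} c_{\alpha,\beta}\,u_{\alpha,\beta}$ with $c_{\alpha,\beta}\in K_m$ and impose $[x_j,z]=[y_j,z]=0$ for all $j$. As the $u_{\alpha,\beta}$ are a basis and $\operatorname{ad}(x_j),\operatorname{ad}(y_j)$ are $K_m$-linear, comparing coefficients (after the evident reindexing $\beta\mapsto\beta-\delta_j$, resp. $\alpha\mapsto\alpha-\delta_j$) shows that $z$ is central if and only if $\beta_j c_{\alpha,\beta}=0$ and $\alpha_j c_{\alpha,\beta}=0$ in $K_m$ for every $j$ and every $(\alpha,\beta)$. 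For a fixed $(\alpha,\beta)$ these integer-multiplication conditions are jointly equivalent, by Bézout, to $g\,c_{\alpha,\beta}=0$ with $g=\gcd_i(\alpha_i,\beta_i)$. Writing $g=p^{s(\alpha,\beta)}g'$ with $p\nmid g'$ and using that $g'$ is a unit of $W_{m,p}$, this reduces to the single condition $p^{s(\alpha,\beta)}c_{\alpha,\beta}=0$.

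It then remains to solve $p^s c=0$ in $K_m$ and to extract the module structure. Here I would use that $K_m$ is flat over $W_{m,p}$, being a localization of a polynomial ring over $W_{m,p}$, so that the annihilator of $p^s$ in $K_m$ equals $(\operatorname{Ann}_{W_{m,p}}p^s)K_m = p^{m-s}K_m$. Thus each central $z$ has $c_{\alpha,\beta}=p^{m-s(\alpha,\beta)}c'_{\alpha,\beta}$, whence $c_{\alpha,\beta}u_{\alpha,\beta}=c'_{\alpha,\beta}\,v_{\alpha,\beta}$; this proves the spanning statement, the monomials with $s(\alpha,\beta)=0$ dropping out because $v_{\alpha,\beta}=p^m u_{\alpha,\beta}=0$, and $(\alpha,\beta)=(0,0)$ accounting for $K_m$ itself. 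For the relations, since $Z_m=\bigoplus_{\alpha,\beta} p^{m-s(\alpha,\beta)}K_m\,u_{\alpha,\beta}$ is a direct sum over the basis, the $K_m$-linear surjection $K_m\to p^{m-s}K_m\,u_{\alpha,\beta}$, $c'\mapsto c'v_{\alpha,\beta}$, has kernel $\operatorname{Ann}_{K_m}(p^{m-s})=p^s K_m$, identifying the $(\alpha,\beta)$-summand with $K_m/p^{s(\alpha,\beta)}K_m$. This is precisely the assertion that the defining relations among the $v_{\alpha,\beta}$ are $p^{s(\alpha,\beta)}v_{\alpha,\beta}=0$.

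I expect the main obstacle to be the torsion bookkeeping over the non-field ring $W_{m,p}$: one must justify carefully that the integer-multiplication annihilation conditions translate into $p$-adic valuation conditions, and that the annihilator of a power of $p$ in $K_m$ is the expected principal submodule, which is exactly where flatness of $K_m$ over $W_{m,p}$ is essential. By contrast, the commutator computation and the Bézout reduction are routine.
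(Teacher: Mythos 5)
Your proof is correct and follows essentially the same route as the paper's: the paper likewise writes an arbitrary element as $\sum_{\alpha,\beta} c_{\alpha,\beta}\,(\prod_i x_i^{\alpha_i})(\prod_i y_i^{\beta_i})$ with $c_{\alpha,\beta}\in K_m$, commutes with the $x_i$ and $y_i$, and concludes that centrality is equivalent to $p^{s(\alpha,\beta)}c_{\alpha,\beta}=0$ for all $\alpha,\beta$. Your write-up simply makes explicit the details the paper leaves to the reader (the derivation formulas for $\operatorname{ad}(x_j)$, $\operatorname{ad}(y_j)$, the B\'ezout reduction to a single condition, and the identification $\operatorname{Ann}_{K_m}(p^s)=p^{m-s}K_m$ via flatness of $K_m$ over $W_{m,p}$).
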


\begin{proof}
Take $f=\sum_{\alpha,\beta}c_{\alpha,\beta}(\prod_i x_i^{\alpha_i})(\prod_i y_i^{\beta_i}) \in D_{p^m}$,
where $c_{\alpha,\beta}\in K_m$. Commuting $f$ with $x_i$ and $y_i$, we find that $f\in Z_m$ if and only if
$p^{s(\alpha,\beta)}c_{\alpha,\beta}=0$ for all $\alpha,\beta$. This implies the statement.
\end{proof}

\section{Hopf actions on fields of characteristic $p$ preserving $p^m$-th powers}\label{towersec}

The following theorem plays an auxiliary role in this paper, but is of independent interest. Throughout this section, we make the following assumptions:
%\par \smallskip

\begin{hypothesis} \label{hypsec3} Take $H$ to be a finite dimensional Hopf algebra over an algebraically closed field $F$ of characteristic $p$, and take $Z$ to be a finitely generated field extension of $F$. Assume that $H$ acts $F$-linearly and inner faithfully on $Z$. All algebras and $\otimes$ are over $F$. Let
$$Z^{p^m} := \{z^{p^m} : z \in Z\}.$$
\end{hypothesis}

The main result of this section is:

\begin{theorem}\label{tower}
Suppose that $p>\dim H$, and $H$ preserves $Z^{p^m}$ for all \linebreak $m\ge 1$. Then $H$ is a group algebra.
\end{theorem}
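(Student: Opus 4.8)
The plan is to show that $H$ coincides with the group algebra of its grouplikes $G(H)$, by analyzing the coradical filtration $H_0\subseteq H_1\subseteq\cdots$ and proving it is trivial, i.e. $H=H_0=FG(H)$. Two independent inputs are combined: the hypothesis $p>\dim H$, which rules out ``infinitesimal'' directions (nonzero primitives), and the tower hypothesis together with the elementary fact that $\bigcap_{m\ge1}Z^{p^m}=F$ (valid because $Z$ is finitely generated over the perfect field $F$), which rules out genuine skew-primitives. Throughout, each grouplike $g\in G(H)$ acts as an $F$-algebra automorphism $\sigma_g$ of $Z$, and by inner faithfulness the assignment $g\mapsto\sigma_g$ is injective (if $\sigma_g=\mathrm{id}$ with $g\ne1$, the action factors through the proper quotient $H/(g-1)$); so $FG(H)\subseteq H$ is faithfully represented, and the content is to show nothing else survives.

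First I would dispose of honest primitives. If $0\ne h\in P(H)$, then $h$ generates a finite-dimensional commutative cocommutative sub-Hopf-algebra, which over the perfect field $F$ is the group algebra of a finite commutative group scheme with nontrivial connected component; such a component has order divisible by $p$, forcing $\dim H\ge p$ and contradicting $p>\dim H$. Hence $P(H)=0$. (Equivalently, the minimal polynomial of a nonzero primitive must be a $p$-th power, of degree $\ge p$.) This step uses only $p>\dim H$, not the tower.

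The heart of the argument is the treatment of $(g,1)$-skew-primitives, i.e. elements $x$ with $\Delta x=x\otimes1+g\otimes x$ for a grouplike $g\ne1$ (general skew-primitives reduce to this by twisting by a grouplike); here I work relative to $G(H)$, i.e. assuming $H$ is pointed, which I address below. Such $x$ act as skew-derivations $\delta:=x\cdot(-)$ satisfying $\delta(ab)=\delta(a)b+\sigma_g(a)\delta(b)$. A direct induction gives $\delta(z^n)=\delta(z)\,\tfrac{\sigma_g(z)^n-z^n}{\sigma_g(z)-z}$ in the commutative field $Z$, and for $n=p^m$ the Frobenius identity $\sigma_g(z)^{p^m}-z^{p^m}=(\sigma_g(z)-z)^{p^m}$ yields
\[
x\cdot z^{p^m}=\delta(z)\,(\sigma_g(z)-z)^{p^m-1}=\frac{\delta(z)}{\sigma_g(z)-z}\,(\sigma_g(z)-z)^{p^m}.
\]
Since $(\sigma_g(z)-z)^{p^m}\in Z^{p^m}$ and $Z^{p^m}$ is a field, the tower hypothesis $x\cdot z^{p^m}\in Z^{p^m}$ forces $\delta(z)/(\sigma_g(z)-z)\in\bigcap_m Z^{p^m}=F$ for every $z$ with $\sigma_g(z)\ne z$; a short argument using $F$-linearity of $\delta$ (pointwise proportional $F$-linear maps with $\sigma_g-\mathrm{id}$ of rank $\ge 2$ are proportional) shows this scalar is a single constant $c\in F$, so $\delta=c(\sigma_g-\mathrm{id})$. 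Thus $x-c(g-1)$ is a skew-primitive acting as zero, hence generates a Hopf ideal through which the action factors; by inner faithfulness $x=c(g-1)\in FG(H)$. Therefore there are no new skew-primitives, so $H_1=H_0$, whence $H=H_0$ by the standard collapse of the coradical filtration ($H_1=H_0\wedge H_0=H_0$ forces $H_n=H_0$ for all $n$).

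The main obstacle I anticipate is reducing to the pointed case, i.e. showing the coradical $H_0$ contains no simple subcoalgebra of dimension $>1$, so that the skew-primitive analysis carried out relative to grouplikes suffices and $H_0=FG(H)$. A matrix block with comatrix elements $x_{ij}$ yields an $F$-algebra embedding $\tau\colon Z\hookrightarrow M_n(Z)$, $z\mapsto(x_{ij}\cdot z)$, whose image is a commutative field; the expectation, in the spirit of the fact that a commutative domain admits no grading with full nonabelian support, is that commutativity of $Z$ forces $n=1$. Making this rigorous for arbitrary $H$ (rather than for $\mathcal{O}(G)$) is the delicate point, and in the non-pointed setting one must also extend the skew-primitive computation to the $(C,D)$-primitives appearing in $H_1/H_0$. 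Once $H=H_0=FG(H)$ is established, $p>\dim H$ finishes cleanly: $H=F[\Gamma]$ is the group algebra of a finite group scheme of order $<p$, whose connected component (of $p$-power order) is trivial, so $\Gamma$ is étale and, over the algebraically closed field $F$, constant; hence $H$ is the group algebra $FG(H)$, necessarily semisimple.
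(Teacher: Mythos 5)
Your proposal contains a genuine gap, and it is precisely the one you flag yourself: the reduction to the pointed case. Your treatment of primitives (via $p>\dim H$) and of $(g,1)$-skew-primitives is correct and does genuinely use the tower hypothesis --- the identity $\delta(z^{p^m})=\delta(z)(\sigma_g(z)-z)^{p^m-1}$ together with $\bigcap_{m\ge 1}Z^{p^m}=F$ does force $\delta=c(\sigma_g-\mathrm{id})$, and inner faithfulness then kills $x-c(g-1)$; the collapse $H_1=H_0\Rightarrow H=H_0$ is also standard. But all of this only controls the \emph{pointed} part of $H_1/H_0$: unless you first prove $H_0=FG(H)$, i.e.\ that $H$ has no simple subcoalgebras of dimension $>1$, you have concluded nothing about $H$. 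The idea you sketch for that step fails: the algebra embedding $\tau\colon Z\hookrightarrow M_n(Z)$, $z\mapsto (x_{ij}\cdot z)$, cannot force $n=1$ merely because its image is commutative, since $M_n(Z)$ contains many commutative subfields (the diagonal copy of $Z$, maximal subfields of degree $n$ over $Z$, etc.); commutativity of the image is no constraint at all. Ruling out higher-dimensional simple subcoalgebras for a general, possibly non-semisimple $H$ acting on a field is exactly the hard content of the theorem, and any correct argument for it must again invoke the tower hypothesis (the statement is false without it), so this is not a technical loose end but the crux of the proof.

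For comparison, the paper's proof sidesteps the pointed/non-pointed dichotomy entirely. It considers the image $B$ of the Galois map $can\colon Z\otimes_{Z^H}Z\to Z\otimes H^*$, a commutative coideal subalgebra of $Z\otimes H^*$. The numerical input $[Z:Z^H]\le \dim H<p$, against $[Z:Z^{p^m}]$ being a power of $p$, gives $Z^HZ^{p^m}=Z$; hence $B=can\bigl(Z\otimes_{Z^H\cap Z^{p^m}}Z^{p^m}\bigr)$ is defined over $Z^{p^m}$ for every $m$, and a Pl\"ucker-coordinate descent using $\bigcap_{m\ge 1}Z^{p^m}=F$ shows $B$ is defined over $F$. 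A coassociativity argument then shows the resulting subalgebra $B_0\subset H^*$ is a Hopf subalgebra, which equals $H^*$ by inner faithfulness; since $B$ is commutative, $H^*$ is commutative, so $H$ is cocommutative, and $p>\dim H$ finishes. This one-stroke argument is what replaces the coradical-filtration analysis you attempted, and it is the ingredient your proposal is missing.
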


The proof of Theorem~\ref{tower} is provided at the end of this section. First, we need the following two lemmas pertaining to the
coideal subalgebra attached to the action of $H$ on $Z$.

Let $\rho: Z\to Z\otimes H^*$ be the dual coaction map. Consider the Galois map
$$
can: Z\otimes_{Z^H}Z\to Z\otimes H^*, \ z\otimes z' \mapsto (z\otimes 1)\rho(z').
$$
Let $B$ be the image of $can$. Then $B$ is a commutative coideal subalgebra in the Hopf algebra $Z\otimes H^*$ over $Z$. The commutativity is clear, and the coideal subalgebra condition follows from an argument similar to  \cite[Lemma 3.2]{EW1}. \par \smallskip

Moreover, we have the following:

\begin{lemma}\label{fieldofdef}
Suppose that $B$ is defined over $F$, that is to say, $B=Z\otimes B_0$ for some subalgebra $B_0\subset H^*$. Then, $B_0=H^*$ and $B=Z\otimes H^*$. In particular, $H$ is cocommutative.
\end{lemma}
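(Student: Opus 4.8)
The plan is to identify the subcoalgebra of $H^*$ underlying the coaction and then feed it into the inner faithfulness hypothesis. Recall that the $H$-action on $Z$ is encoded by the right $H^*$-comodule algebra structure $\rho\colon Z\to Z\otimes H^*$, and let $C\subseteq H^*$ be its \emph{coefficient coalgebra}, i.e.\ the smallest subcoalgebra with $\rho(Z)\subseteq Z\otimes C$. Since $can(1\otimes z)=\rho(z)$, we have $\rho(Z)\subseteq B=Z\otimes B_0$, and hence $C\subseteq B_0$. (In fact, because $B$ is the $F$-linear span of the elements $(z\otimes 1)\rho(z')$, comparing coefficient spaces shows $B_0=C$ exactly, but only the inclusion $C\subseteq B_0$ is needed below.)

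Next I would promote $C$ to a Hopf subalgebra. Because $\rho$ is an algebra map, the coefficients of $\rho$ are closed under multiplication, so the subalgebra $\widetilde C\subseteq H^*$ generated by $C$ is again a subcoalgebra — the span of products of two subcoalgebras is a subcoalgebra, since $\Delta(cd)=\sum c_{(1)}d_{(1)}\otimes c_{(2)}d_{(2)}$ — and therefore a sub-bialgebra. As $B_0$ is a subalgebra containing $C$, it contains $\widetilde C$. This is exactly where the hypothesis that $B$ is \emph{defined over $F$} does the real work: it forces $C$ to lie inside an honest subalgebra $B_0$ of $H^*$, rather than merely inside a right coideal subalgebra. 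Invoking the standard fact that a finite-dimensional sub-bialgebra of a Hopf algebra is automatically a Hopf subalgebra (the antipode of the finite-dimensional $H^*$ being bijective), we conclude that $\widetilde C$ is a Hopf subalgebra of $H^*$.

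Finally I would translate inner faithfulness. Dualizing, the $H$-action factors through a quotient Hopf algebra $H/I$ precisely when $I\cdot Z=0$, which holds if and only if $C\subseteq I^{\perp}$, where $I^{\perp}\subseteq H^*$ is the annihilator of $I$; and $I\mapsto I^{\perp}$ is the bijection between Hopf ideals of $H$ and Hopf subalgebras of $H^*$. Thus inner faithfulness of the action is equivalent to the assertion that no proper Hopf subalgebra of $H^*$ contains $C$, i.e.\ the only such Hopf subalgebra is $H^*$ itself. Since $\widetilde C$ is a Hopf subalgebra containing $C$, we get $\widetilde C=H^*$, whence $B_0=H^*$ and $B=Z\otimes H^*$. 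Cocommutativity is then immediate: commutativity of $B$ restricts to the copy $1\otimes H^*\subseteq B$, so $H^*$ is commutative and $H$ is cocommutative.

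The main obstacle is conceptual rather than computational: one must (i) set up the dictionary between inner faithfulness of the action and the non-existence of a proper Hopf subalgebra of $H^*$ containing the coefficient coalgebra $C$, and (ii) use finite-dimensionality to upgrade the sub-bialgebra generated by $C$ to a genuine Hopf subalgebra. Both are points where the positive-characteristic, non-semisimple setting could conceivably interfere, but I expect neither to actually require semisimplicity of $H$.
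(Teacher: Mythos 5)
Your proof is correct, but its key mechanism differs from the paper's. The paper attacks $B_0$ head-on: writing $\rho(z)=\sum_i\rho_i(z)\otimes b_i$ in a basis $\{b_i\}$ of $B_0$ and using coassociativity together with elements $a_{sm},z_{sm}\in Z$ satisfying $\sum_s(a_{sm}\otimes 1)\rho(z_{sm})=1\otimes b_m$ (these exist precisely because of the \emph{equality} $B=Z\otimes B_0$, i.e.\ the inclusion $Z\otimes B_0\subseteq B$), it shows directly that $\Delta(b_m)\in B_0\otimes B_0$, so that $B_0$ itself is a sub-bialgebra; finite-dimensionality then makes it a Hopf subalgebra, and inner faithfulness forces $B_0=H^*$. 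You avoid this computation entirely by passing to the coefficient coalgebra $C\subseteq B_0$ and showing that the subalgebra it generates is a sub-bialgebra (products of subcoalgebras are subcoalgebras), hence a Hopf subalgebra, hence all of $H^*$ by inner faithfulness, whence $B_0\supseteq H^*$. Both routes rest on the same two standard facts --- a sub-bialgebra of a finite-dimensional Hopf algebra is a Hopf subalgebra, and the dual dictionary translating inner faithfulness into ``no proper Hopf subalgebra of $H^*$ contains the coefficients of $\rho$'' (the paper invokes this dictionary in exactly the same unproved form) --- but your argument is more modular and actually proves a slightly stronger statement: it uses only the inclusion $B\subseteq Z\otimes B_0$ together with $B_0$ being a subalgebra, never the reverse inclusion, and it isolates the reusable principle that an inner faithful coaction has coefficient coalgebra generating $H^*$ as an algebra. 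The paper's computation, by contrast, is self-contained and exhibits the bialgebra structure of $B_0$ explicitly. One small imprecision: your parenthetical reason for the sub-bialgebra $\Rightarrow$ Hopf subalgebra step (bijectivity of the antipode of $H^*$) is not the actual reason --- the standard proof is a convolution-algebra argument using finite-dimensionality of the sub-bialgebra --- but since the paper cites the fact with no more justification, this is cosmetic.
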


\begin{proof}
Let $\{b_i\}_{i \in \mathcal{I}}$ be a basis of $B_0$, for some index set $\mathcal{I}$. Thus, the coaction of $H^*$ on $Z$ is defined by the formula
$$
\rho(z)=\sum_i \rho_i(z)\otimes b_i,
$$
for linear maps $\rho_i: Z\to Z$. Applying the coproduct in the second component and using coassociativity, we get
$$
\sum_i \rho_i(z)\otimes \Delta(b_i)=\sum_{i,j}\rho_j(\rho_i(z))\otimes b_j\otimes b_i.
$$
Let $a_{sm},z_{sm}\in Z$ be such that $\sum_s (a_{sm}\otimes 1)\rho(z_{sm})=1\otimes b_m$. They exist because $B=Z\otimes B_0$. Applying the coproduct again in the second component and using the
previous equality, we obtain
$$
1\otimes \Delta(b_m)=\sum_{i,s} a_{sm}\rho_i(z_{sm})\otimes \Delta(b_i)=\sum_{i,j,s}a_{sm}\rho_j(\rho_i(z_{sm}))\otimes b_j\otimes b_i.
$$
This implies that $\Delta(b_m)\in B_0\otimes B_0$. In other words, $B_0$ is a subbialgebra of $H^*$. Since $H^*$ is finite dimensional, $B_0$ is a Hopf subalgebra of $H^*$. Since $H$ acts inner faithfully on $Z$, there does not exist a proper Hopf subalgebra $K$ of $H^*$
so that $\rho(Z) \subset Z \otimes K$. Hence, $B_0=H^*$. But $B_0$ is commutative by assumption, so $H^*$ is commutative and $H$ is cocommutative, as desired.
 \end{proof}

\begin{lemma} \label{Plucker} The following conditions on $B$ are equivalent:
\begin{enumerate}
\item[(i)] $B$ is defined over $F$;
\item[(ii)] For any $m$, the subspace $B$ is defined over $Z^{p^m}$, that is to say, there exists an $Z^{p^m}$-subspace $V_m$ of $Z^{p^m}\otimes H^*$ such that $B=Z\otimes_{Z^{p^m}}V_m$.
\end{enumerate}
\end{lemma}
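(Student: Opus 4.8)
The plan is to prove both implications, the forward one being immediate and the reverse one resting on a field-of-definition argument carried out via Plücker coordinates. For (i) $\Rightarrow$ (ii): if $B=Z\otimes B_0$ with $B_0\subseteq H^*$, then since $F$ is perfect we have $F=F^{p^m}\subseteq Z^{p^m}$, so I simply take $V_m:=Z^{p^m}\otimes B_0\subseteq Z^{p^m}\otimes H^*$; base change along $Z^{p^m}\hookrightarrow Z$ gives $Z\otimes_{Z^{p^m}}V_m=Z\otimes B_0=B$, which is (ii).

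The substance is (ii) $\Rightarrow$ (i). Write $V:=H^*$ and $d:=\dim_Z B$, and note $B\ne 0$ since $1\otimes1\in B$. The idea is to pass to the Plücker embedding: choosing a $Z$-basis of $B$ and forming its exterior product yields a nonzero decomposable vector
$$\omega\in \textstyle\bigwedge^d_Z(Z\otimes V)=Z\otimes\bigwedge^d V=:Z\otimes U,$$
well defined up to a scalar in $Z^\times$. The key reduction is the standard criterion that, for any intermediate field $F\subseteq L\subseteq Z$, the $Z$-subspace $B$ is defined over $L$ (in the sense of the lemma) if and only if the line $Z\omega$ is defined over $L$, i.e.\ $\omega\in Z^\times\cdot(L\otimes U)$. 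One direction is clear, since a basis of an $L$-form $B_L$ wedges to a scalar multiple of $\omega$; conversely a decomposable representative $\omega_0\in L\otimes U$ of the line recovers an $L$-form of $B$. Applying this with $L=Z^{p^m}$ and with $L=F$ shows that (ii) and (i) are exactly the statements that $Z\omega$ is defined over $Z^{p^m}$ for all $m$, respectively over $F$. Thus the problem about the subspace $B$ is converted into one about a single point of projective space.

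Now fix an $F$-basis $u_1,\dots,u_M$ of $U$ and write $\omega=\sum_i c_i u_i$ with $c_i\in Z$, and fix an index $i_0$ with $c_{i_0}\ne0$. Hypothesis (ii) gives, for each $m$, a decomposition $\omega=\lambda_m\omega_m$ with $\lambda_m\in Z^\times$ and $\omega_m=\sum_i c_i^{(m)}u_i$, $c_i^{(m)}\in Z^{p^m}$. Then $c_{i_0}^{(m)}\ne0$ and, $Z^{p^m}$ being a field, each ratio $c_i/c_{i_0}=c_i^{(m)}/c_{i_0}^{(m)}$ lies in $Z^{p^m}$. Since this holds for every $m$, we get $c_i/c_{i_0}\in\bigcap_{m\ge1}Z^{p^m}$ for all $i$. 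Hence the whole argument reduces to the field-theoretic fact $\bigcap_{m\ge1}Z^{p^m}=F$, which then yields $\omega=c_{i_0}\sum_i(c_i/c_{i_0})u_i\in Z^\times\cdot U$, i.e.\ $B$ is defined over $F$.

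The main obstacle is establishing $\bigcap_{m\ge1}Z^{p^m}=F$, and this is where the hypotheses that $F$ is algebraically closed and $Z$ is finitely generated enter. The inclusion $\supseteq$ is clear as $F=F^{p^m}$. For $\subseteq$, I would first observe that $P:=\bigcap_m Z^{p^m}$ is a perfect subfield of $Z$ containing $F$: it is a nested intersection of subfields, and if $\lambda\in P$ then writing $\lambda=\xi^{p^{k+1}}$ with $\xi\in Z$ gives $\lambda^{1/p}=\xi^{p^k}\in Z^{p^k}$ for every $k$, so $\lambda^{1/p}\in P$. It then suffices to show that any perfect subfield $P$ of the finitely generated extension $Z/F$ with $F\subseteq P$ is algebraic over $F$, for then $P\subseteq F$ because $F$ is algebraically closed in $Z$. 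If $P$ contained a transcendental element $u$, perfectness would place the entire purely inseparable tower $u^{1/p^k}$ inside $Z$; extending $u$ to a transcendence basis of $Z$ and using that $Z$ is \emph{finite} over the corresponding rational function field, the degrees of these towers would be unbounded, contradicting that finiteness. This establishes $P=F$ and completes the proof.
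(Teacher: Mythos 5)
Your proof is correct and follows essentially the same route as the paper: both arguments reduce the lemma to the field-theoretic fact $\bigcap_{m\ge 1}Z^{p^m}=F$ (via perfectness of that intersection and finite generation of $Z$ over $F$), and both translate ``defined over $L$'' into the statement that ratios of Pl\"ucker coordinates of $B$ lie in $L$. The only minor difference is internal to that key fact: you rule out a transcendental element of the intersection by a direct unbounded-inseparable-degree argument, whereas the paper instead invokes the standard result that an intermediate subfield of a finitely generated field extension is itself finitely generated.
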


\begin{proof}
It is clear that the intersection $L:=\bigcap_{m\ge 0}Z^{p^m}$ is a perfect field. Also, $L$ is finitely generated over $F$, since it is a subfield of $Z$ containing $F$. This yields that $L=F$.

Now, condition (i) is equivalent to the condition that ratios of the Pl\"ucker coordinates of $B$ as a $Z$-subspace of $Z\otimes H^*$ lie in $F$.
Since $\bigcap_{m\ge 0}Z^{p^m}=F$, this is, in turn, equivalent to the condition that ratios of the Pl\"ucker coordinates of $B$ lie in $Z^{p^m}$ for all $m$.
But the last statement is clearly equivalent to condition (ii).
\end{proof}

Now we prove the main result of this section.
\medskip

\noindent {\it Proof  of Theorem \ref{tower}}.
For any $m$, let $V_m$ denote the span of $can(z\otimes z')$, where
$z,z'\in Z^{p^m}$. Since $H$ preserves $Z^{p^m}$, the space $V_m$ is a $Z^{p^m}$-subspace
of $Z^{p^m}\otimes H^*$. \par \smallskip

Now, we claim that $Z^HZ^{p^m}=Z$. Indeed, by \cite[Corollary 2.3]{BCF}, we have $[Z:Z^H]\le \dim H$, so $[Z:Z^H]$ is not divisible by $p$. Since $[Z:Z^HZ^{p^m}]$ divides $[Z:Z^H]$, we also have that $[Z:Z^HZ^{p^m}]$ is not divisible by $p$. On the other hand, $[Z:Z^HZ^{p^m}]$ divides
$[Z:Z^{p^m}]$, so is a power of $p$. Therefore, $[Z:Z^HZ^{p^m}]=1$. \par \smallskip

Thus, $Z \otimes_{Z^H} Z = Z\otimes_{Z^H}(Z^HZ^{p^m})=Z\otimes_{Z^H\cap Z^{p^m}} Z^{p^m}$. Hence,
$$B=can(Z \otimes_{Z^H} Z) = can\big(Z\otimes_{Z^H\cap Z^{p^m}} Z^{p^m}\big)$$
is equal to $Z\otimes_{Z^{p^m}}V_m$.
Hence, $B$ is defined over $Z^{p^m}$ for all $m$, which by Lemmas~\ref{fieldofdef} and~\ref{Plucker}, implies that $H$ is cocommutative.
Thus, $H$ is a group algebra (using again that $p>\dim H$). \qed

\begin{remark} By \cite[Proposition 3.9]{Et}, the assumption in Theorem \ref{tower} that $p>\dim H$ can be replaced by a weaker assumption that $p$ does not divide $\dim H$.
\end{remark}

 \section{Proof of Theorems \ref{main} and~\ref{diffop}} \label{proofmain}
 To begin, we simplify notation as follows.

 \medskip

\noindent {\it Notation}. We denote invariants under $H_{p^m}$ just by superscript $H$. For instance, we will write $D_p^H$ for $H_p$-invariants in $D_p$, and $Z^H$ for $H_p$-invariants in $Z$.

\subsection{Structure of the proof of Theorem \ref{main}}

Since the proof of Theorem~\ref{main} is rather technical, let us describe its structure.
The proof consists of three parts. To begin, we take a prime number $p \gg 0$. \par \smallskip

1. In Lemma \ref{saturation}, we show that all $H_p$-invariants in $D_p$ lift modulo $p^m$ for all $m$
(to invariants in $D_{p^m}$). This is done by induction in $m$, and is based
on Lemma \ref{generation}. The argument relies on constructing a large amount of invariants in characteristic zero (which is done in Lemma \ref{express}) and then reducing them modulo $p^m$. This creates a sufficient supply of invariants modulo $p^m$ to show that all invariants modulo $p^{m-1}$ must lift modulo $p^m$. \par \smallskip

2. Using Lemma \ref{saturation}, we show that the centralizer of $D_{p^m}^H$ in $D_{p^m}$ reduces to the center $Z_m$ of $D_{p^m}$. (Basically, the argument says that since there are a lot of invariants, commuting with them is a strong condition and forces the element to be in the center).
Using this, and Lemma \ref{center} (which says that the reduction modulo $p$ of $Z_m$ is $Z^{p^m}$), we prove in Proposition \ref{invariance} that $Z^{p^m}$ is $H_p$-invariant. \par \smallskip

3. Now Propositions~\ref{propCEW}(i) and~\ref{invariance} imply that the assumptions of Theorem~\ref{tower} applied to the $H_p$-action on $Z$
are satisfied. Applying Theorem~\ref{tower}, we conclude that $H_p$ is cocommutative.
Since this holds for sufficiently large $p$, we conclude that $H$ is cocommutative and hence a group algebra.

\subsection{Abundance of invariants}

By \cite[Theorem~2.2]{SVO}, the action of $H_{p^m}$ on $A_{p^m}$ extends to $D_{p^m}$.
The goal of this subsection is to show that there are ``many''  invariants of the $H_{p^m}$-action on $D_{p^m}$ for any $m$, in the sense that any invariant modulo $p^{m-1}$ lifts modulo $p^{m}$. \par \smallskip

We need the following notation. \medskip

\noindent $\bullet$ $D_p^H(m)=D_{p^m}^H/(pD_{p^m}\cap D_{p^m}^H)$,
identified with the image of $D_{p^m}^H$ in $D_p$. \smallskip

\noindent $\bullet$ $Z^H(m)$ is  the center of $D_p^H(m)$.
\medskip

\noindent Note that $D_p^H(m)$ is a division subalgebra of $D_p$, and
$$
D_p^H=D_p^H(1)\supset D_p^H(2)\supset \dots \supset D_p^H(m)\supset\dots.
$$

\begin{lemma}\label{generation} Take $p \gg 0$. Then, for any $m$, one has $D_p=D_p^H(m)A_p$, and hence $D_{p^m}=D_{p^m}^HA_{p^m}$. Moreover, $D_p$ is spanned over $D_p^H(m)$ as a left vector space by the monomials in $x_i,y_i$ of degree less or equal than $N$.
\end{lemma}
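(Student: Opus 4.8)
The plan is to establish the characteristic-zero statement first, then transport it to positive characteristic via reduction, and finally lift from $D_p$ to $D_{p^m}$. Let me fix notation: $N$ is the integer from Lemma~\ref{express}, so that the monomials $u$ in $x_i,y_i$ of degree $\le N$ span $\QQ_A$ over $\QQ_A^H$ as a left vector space, and every monomial $w$ of degree $N+1$ satisfies $w=\sum_{u:\deg(u)\le N} b_{w,u}u$ with finitely many coefficients $b_{w,u}\in\QQ_A^H$. The central idea is that these characteristic-zero invariants $b_{w,u}$ (together with the finitely many noncommutative polynomials in them arising from iterating the relation, as in Lemma~\ref{express}) all live in the ring $R$ of structure constants, after possibly enlarging $R$ to contain their numerators and denominators. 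Hence they reduce modulo $p^m$ for $p\gg 0$, yielding invariants of the $H_{p^m}$-action on $D_{p^m}$, and in particular elements of $D_p^H(m)$ after further reduction mod $p$.

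First I would enlarge $R$ so that all the relevant characteristic-zero invariants $b_{w,u}$ lie in $A_R$ after inverting finitely many elements of $R$; this is possible because there are only finitely many such coefficients and each is a fixed element of $\QQ_A^H$. Then, for $p\gg 0$, the reduction $\psi_m: R\to W_{m,p}$ sends each $b_{w,u}$ to a well-defined element of $D_{p^m}$, and since reduction is compatible with the $H$-action (by the construction of the $H_{p^m}$-action by tensoring with $W_{m,p}$ via $\psi_m$), these reductions are genuine $H_{p^m}$-invariants, i.e. they lie in $D_{p^m}^H$. Reducing the identity \eqref{spanset} modulo $p^m$ shows that in $D_{p^m}$ every monomial $w$ of degree $N+1$ is a $D_{p^m}^H$-combination of the degree-$\le N$ monomials; iterating (using Lemma~\ref{express}, whose coefficient identities also reduce modulo $p^m$) shows the same for monomials of arbitrary degree. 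Since the degree-$\le N$ monomials together with the degree-$(N+1)$ monomials and their iterates span $A_{p^m}$ over $W_{m,p}\subset D_{p^m}^H$, I conclude $D_{p^m}=D_{p^m}^H A_{p^m}$. Reducing modulo $p$ gives $D_p=D_p^H(m)A_p$ with the degree-$\le N$ monomials spanning $D_p$ over $D_p^H(m)$ as a left vector space, which is the asserted conclusion.

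The main obstacle I expect is the bookkeeping needed to guarantee that \emph{all} the invariant coefficients appearing across all degrees come from a \emph{single} finite set defined over $R$, so that one choice of $p\gg 0$ works uniformly. Lemma~\ref{express} is exactly designed to address this: it asserts that the $b_{v,u}$ for arbitrary $v$ are noncommutative polynomials in the finite family $\{b_{w,u}: \deg(w)=N+1,\ \deg(u)\le N\}$, so only finitely many generators need to be defined over $R$, and the polynomial relations expressing higher $b_{v,u}$ in terms of them are purely formal consequences of the algebra relations, hence persist under reduction modulo $p^m$. The one point requiring care is that reduction modulo $p$ of $D_{p^m}^H$ lands in $D_p^H(m)$ by definition, and that the spanning relations survive this second reduction — but this is immediate since $D_p^H(m)$ is precisely the image of $D_{p^m}^H$ in $D_p$, and the span of the degree-$\le N$ monomials is preserved under the surjection $D_{p^m}\to D_p$.
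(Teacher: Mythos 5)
Your outline follows the same overall strategy as the paper (reduce the finitely many coefficients $b_{w,u}$ from Lemma~\ref{express} modulo $p^m$, show the reductions are invariant, deduce that $D_p^H(m)A_p$ is finitely spanned, and invoke Lemma~\ref{locali1}), but it has a genuine gap at the crucial step. You justify the invariance of the reductions $b_{w,u,p^m}$ by saying that ``reduction is compatible with the $H$-action (by the construction of the $H_{p^m}$-action by tensoring with $W_{m,p}$ via $\psi_m$).'' That construction, however, only defines the action on $A_{p^m}$; the action on $D_{p^m}$ is the abstract extension of \cite[Theorem 2.2]{SVO}, and the $b_{w,u}$ are \emph{fractions}, not elements of $A_R$. (In particular, they do not lie in $A_R$ after inverting finitely many elements of $R$: the denominators are elements $T\in A_R$, not scalars, so one must invert elements of $A_R$.) Writing $b=T^{-1}a$ with $T,a\in A_R$ and $\rho(T)=\sum_i T_i\otimes h_i^*$, $\rho(a)=\sum_i a_i\otimes h_i^*$, the characteristic-zero invariance of $b$ amounts to the identities $T_iT^{-1}a=a_i$ in $\QQ_A$. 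Because $T^{-1}$ sits in the \emph{middle} of a product in a noncommutative ring, these identities cannot be cleared of denominators by multiplication; they are not identities in $A_R$ and therefore do not automatically specialize under $\psi_m$. This is exactly the point addressed by the Sublemma in the paper's proof (added at the referee's request, per the acknowledgments): one uses the Ore condition to also write $b$ as a right fraction $b=a_*T_*^{-1}$ with $aT_*=Ta_*$, which converts invariance into the denominator-free identities $T_ia_*=a_iT_*$ in $A$; these reduce modulo $p^m$, and since $T_{*,p^m}$ and $T_{p^m}$ are invertible in $D_{p^m}$ for $p\gg 0$, one reassembles them into $\rho(T_{p^m}^{-1}a_{p^m})=T_{p^m}^{-1}a_{p^m}\otimes 1$. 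Without this (or an equivalent) argument, the invariance of the reduced coefficients --- the heart of the lemma --- is unproved.

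There is also a structural misstep in your order of deduction: you first claim $D_{p^m}=D_{p^m}^HA_{p^m}$ from the spanning statement and then reduce modulo $p$. But passing from ``$D_{p^m}^HA_{p^m}$ is finitely spanned over $D_{p^m}^H$'' to ``$D_{p^m}^HA_{p^m}=D_{p^m}$'' requires an analogue of Lemma~\ref{locali1}, which is stated for a left Ore \emph{domain} $S$ and a \emph{division} subalgebra $C$ of $\QQ_S$; for $m\ge 2$ neither hypothesis holds ($A_{p^m}$ has the nonzero nilpotent $p$, and $D_{p^m}$ is only Artinian, not a division ring), and the integrality trick used there breaks down. The paper argues in the opposite order: reduce modulo $p$ first, where $A_p$ is an Ore domain with division ring of fractions $D_p$, apply Lemma~\ref{locali1} with $S=A_p$ and $C=D_p^H(m)$ to get $D_p=D_p^H(m)A_p$, and only then deduce $D_{p^m}=D_{p^m}^HA_{p^m}$ from the nilpotence of $p$: since $D_p^H(m)$ is by definition the image of $D_{p^m}^H$ in $D_p$, one gets $D_{p^m}=D_{p^m}^HA_{p^m}+pD_{p^m}$, and iterating yields $D_{p^m}=D_{p^m}^HA_{p^m}+p^jD_{p^m}$ for all $j$, whence the claim at $j=m$. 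Reordering your argument this way, and supplying the invariance argument above, would bring it in line with a complete proof.
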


\begin{proof}
First note that if $p$ is large enough and $\psi$ is sufficiently generic, then the elements $b_{w,u}$ from Lemma~\ref{express} (for ${\rm deg}(w)=N+1,{\rm deg}(u)=N$) can be reduced modulo $p^m$ (cf. \cite[proof of Proposition~4.4]{CEW}). More precisely, we have $b_{w,u}=T^{-1}b_{w,u}'$, where $T,b_{w,u}'\in A$. We should choose $R$ so that it contains the coefficients of
$T,b_{w,u}'$. Then for sufficiently large $p$ and a suitably generic choice of $\psi$ the reduction of $T$ modulo $p$ is not zero, so the reduction of $T^{-1}$ is defined.  Let $b_{w,u,p^m}\in D_{p^m}$ be the reductions of $b_{w,u}$ modulo $p^m$. 

\vskip .05in

\noindent {\bf Sublemma.} One has $b_{w,u,p^m}\in D_{p^m}^H$, i.e., $b_{w,u,p^m}$ is invariant under $H_{p^m}$. 

\vskip .05in

\noindent {\it Proof of the Sublemma.} Let $b:=b_{w,u}$, and write $b$ as $T^{-1}a$, where $T,a\in A$. 
Then $Tb=a$. Since $b$ is $H$-invariant, applying the coaction 
to this equality, we obtain 
$$
\sum_i T_ib\otimes h_i^*=\sum_i a_i\otimes h_i^*,
$$
where $h_i$ is a basis of $H$, $h_i^*$ the dual basis of $H^*$, and 
$$
\rho(T)=\sum_i T_i\otimes h_i^*,\ \rho(a)=\sum_i a_i\otimes h_i^*. 
$$
Thus, $T_ib=a_i$ for all $i$. Since $A$ is an Ore domain, 
there exist $T_*\ne 0,a_*\in A$ such that $aT_*=Ta_*$.  
So $b=a_*T_*^{-1}$, hence $T_ia_*=a_iT_*$. 

For sufficiently large $p$, the reductions of 
all the above elements modulo $p^m$ are defined,
and the reduction of $T_*$ is invertible (i.e., nonzero modulo $p$). So we have the identities 
$$
a_{p^m}T_{*,p^m}=T_{p^m}a_{*,p^m},\ T_{i,p^m}a_{*,p^m}=a_{i,p^m}T_{*,p^m}, 
$$
$$
\rho(T_{p^m})=\sum_i T_{i,p^m}\otimes h_{i,p^m}^*,\ \rho(a_{p^m})=\sum_i a_{i,p^m}\otimes h_{i,p^m}^*, 
$$
where the subscripts $p^m$ denote the reductions modulo $p^m$. Thus, 
$$
\rho(T_{p^m})(a_{*,p^m}\otimes 1)=\rho(a_{p^m})(T_{*,p^m}\otimes 1).
$$
Hence, 
$$
\rho(T_{p^m})(a_{*,p^m}T_{*,p^m}^{-1}\otimes 1)=\rho(a_{p^m}).
$$
Therefore, 
$$
\rho(T_{p^m})(T_{p^m}^{-1}a_{p^m}\otimes 1)=\rho(a_{p^m}),
$$
or 
$$
T_{p^m}^{-1}a_{p^m}\otimes 1=\rho(T_{p^m}^{-1})\rho(a_{p^m})=\rho(T_{p^m}^{-1}a_{p^m}).
$$
This shows that the element $b_{p^m}=T_{p^m}^{-1}a_{p^m}$ is $H_{p^m}$-invariant, as desired. \qed
\medskip

By the Sublemma,  the elements $b_{w,u,p}$ belong to $D_p^H(m)$ for all $m$ (as they are reductions of $b_{w,u,p^m}$ modulo $p$).
So we conclude that $D_p^H(m)A_p$ is spanned over $D_p^H(m)$ by the monomials in $x_i$, $y_i$ of degree less or equal than $N$. Thus, $D_p^H(m)A_p$ is finite dimensional over $D_p^H(m)$, and hence the result follows from Lemma \ref{locali1}.
\end{proof}

Let $M$ be a free $\Bbb Z/p^m\Bbb Z$-module. Recall that a submodule $M'\subset M$
is called {\it saturated} if the natural map $M'/pM'\to M/pM$ is injective, that is,
$(pM) \cap M'=pM'$. Equivalently,  $M'$ is saturated if $M/M'$ is free.

\begin{example} The center $Z_m$ of $D_{p^m}$ is not saturated. By Lemma \ref{centerstr}, $Z_m$ contains elements $px_i^{p^{m-1}}$ which project to zero in $D_{p^m}/(p)$, but to nonzero in $Z_m/(p)$.
\end{example}

\begin{lemma}\label{saturation}  Take $p \gg 0$.
For any $m$, the inclusion $D_p^H(m) \hookrightarrow D_p^H$ is an isomorphism.
In other words, the $\Bbb Z/p^m\Bbb Z$-submodules $D_{p^m}^H\subset D_{p^m}$
are saturated (i.e., invariants modulo $p^{m-1}$ lift modulo $p^m$).
\end{lemma}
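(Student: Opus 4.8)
\emph{Plan of proof.} I would first reformulate saturation as a lifting statement and induct on $m$, the case $m=1$ being vacuous. Concretely, suppose $px\in D_{p^m}^H$ with $x\in D_{p^m}$. Applying any $h\in H_{p^m}$ gives $p\,(h\cdot x-\varepsilon(h)x)=0$; since $D_{p^m}$ is the localization of $\bold A_n(W_{m,p})$ at the complement of $(p)$ and $W_p$ is a domain, the $p$-torsion of $D_{p^m}$ is exactly $p^{m-1}D_{p^m}$ (this matches ${\rm gr}(D_{p^m})=D_p[z]/(z^m)$). Hence $h\cdot x-\varepsilon(h)x\in p^{m-1}D_{p^m}$, so the image $x'$ of $x$ in $D_{p^{m-1}}$ is $H_{p^{m-1}}$-invariant. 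If $x'$ lifts to some $y\in D_{p^m}^H$, then $x-y\in p^{m-1}D_{p^m}$, whence $px=py\in pD_{p^m}^H$, which is precisely saturation. Thus it suffices to prove that the reduction $D_{p^m}^H\to D_{p^{m-1}}^H$ is surjective; composing these surjections down to $D_p^H$ also yields $D_p^H(m)=D_p^H$, the isomorphism asserted in the statement.

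For the inductive step I would use the supply of invariants produced in Lemma~\ref{generation}. By its Sublemma, the reductions $b_{w,u,p^m}\in D_{p^m}^H$ of the characteristic-zero invariants $b_{w,u}$ of Lemma~\ref{express} are $H_{p^m}$-invariant for \emph{every} $m$, and $b_{w,u,p^m}$ reduces to $b_{w,u,p^{m-1}}$ modulo $p^{m-1}$; in particular every $b_{w,u,p^{m-1}}$ lifts to an invariant modulo $p^m$ \emph{by construction}. By Lemma~\ref{express}, every element of $D_{p^m}$ is a left $D_{p^m}^H$-combination of the finitely many monomials of degree $\le N$, with coefficients that are noncommutative polynomials in the $b_{w,u,p^m}$, and this spanning reduces compatibly modulo $p^{m-1}$. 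Given $\hat c\in D_{p^{m-1}}^H$, I would pick any lift $\tilde c\in D_{p^m}$ and record its failure of invariance through $\gamma(h):=p^{-(m-1)}\big(h\cdot\tilde c-\varepsilon(h)\tilde c\big)\bmod p\in D_p$, a $1$-cocycle on $H_p$ with values in $D_p$ whose class in $\mathrm{Ext}^1_{H_p}(\overline{\FF}_p,D_p)$ is independent of the lift. Surjectivity of the reduction at level $m$ is then equivalent to the vanishing of $[\gamma]$.

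The heart of the argument is to show $[\gamma]=0$. The assignment $\hat c\mapsto[\gamma]$ is additive and should satisfy a Leibniz rule for the $D_p^H$-module structure on $\mathrm{Ext}^1_{H_p}(\overline{\FF}_p,D_p)$, and it vanishes on each $b_{w,u,p^{m-1}}$ since these lift. Invoking the inductive hypothesis (which renders all lower-order invariants liftable) together with the spanning of $D_{p^{m-1}}$ by degree-$\le N$ monomials with coefficients polynomial in the $b_{w,u,p^{m-1}}$, I would propagate the vanishing from these generating invariants to an arbitrary $\hat c$, producing the required lift in $D_{p^m}^H$. The uniform bound $[D_p:D_p^H(m)]\le\#\{u:\deg u\le N\}$ supplied by Lemma~\ref{generation} (via Lemma~\ref{locali1}) is what keeps this bookkeeping finite: in the tower $D_p^H(m)\subseteq D_p^H\subseteq D_p$ the index over $D_p^H(m)$ cannot exceed that fixed bound, consistently forcing $D_p^H(m)=D_p^H$.

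I expect the vanishing of $[\gamma]$ to be the main obstacle. Because $H_p$ is \emph{not} semisimple—this is exactly the hypothesis of \cite{CEW} that we are dropping—the group $\mathrm{Ext}^1_{H_p}(\overline{\FF}_p,D_p)$ need not be zero, so no soft cohomological vanishing is available and one must genuinely exploit that an explicit, abundant family of invariants (the $b_{w,u,p^m}$) lifts. This is precisely where reduction modulo prime \emph{powers}, rather than primes, is forced upon us. Technically it also dictates care in the choices: I would enlarge $R$ to contain the coefficients of the finitely many $T,b_{w,u}'$ with $b_{w,u}=T^{-1}b_{w,u}'$, and take $p\gg 0$ with $\psi$ generic so that all these reductions modulo $p^m$, and the inverse of the reduction of $T$, are defined—exactly as in the proof of Lemma~\ref{generation}.
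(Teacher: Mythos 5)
Your setup is sound and matches the paper's framing: reformulating saturation as surjectivity of the reduction $D_{p^m}^H\to D_{p^{m-1}}^H$, inducting on $m$, and encoding the failure of an invariant $\hat c\in D_{p^{m-1}}^H$ to lift as a class $[\gamma]\in H^1(H_p,D_p)=\mathrm{Ext}^1_{H_p}(\overline{\FF}_p,D_p)$ is exactly the paper's spectral-sequence differential $\partial=d_{m-1}$, and you are right that no soft cohomological vanishing is available since $H_p$ need not be semisimple. But the heart of your argument --- killing $[\gamma]$ by ``propagating'' its vanishing from the elements $b_{w,u,p^{m-1}}$ to an arbitrary invariant via the spanning of Lemma~\ref{generation} --- has no working mechanism, and this is a genuine gap. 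The spanning writes $\hat c=\sum_u c_u u$ where the $u$ are monomials of degree $\le N$, which are \emph{not} $H_p$-invariant; the obstruction map $\partial$ is defined only on invariants, so it cannot be applied termwise to such a decomposition, and the invariance of $\hat c$ arises from cancellation among non-invariant terms, so $[\gamma]$ is not determined by the (vanishing) obstructions of the coefficients $c_u$. Nor can one argue inside the division subalgebra generated by the $b_{w,u,p}$: there is no reason an arbitrary element of $D_p^H$ is a rational function of the $b$'s, and the paper never claims this. Finally, your closing remark that the uniform bound $[D_p:D_p^H(m)]\le \#\{u:\deg u\le N\}$ ``consistently forces'' $D_p^H(m)=D_p^H$ is not correct as stated: a bounded index does not force equality (the index could equal $2$ for every $m$ without violating the bound).

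What the paper actually does at this point is qualitatively different and rests on two ingredients absent from your proposal. First, the index bound from Lemma~\ref{generation} is exploited through Lemma~\ref{divalg}: since $[D_p:D_p^H(m)]<p$ for $p\gg 0$, the degree of the division algebra $D_p^H(m)$ cannot drop below $p^n$ (a drop would force index at least $p$ by Lemma~\ref{divalg}(i)), so all the $D_p^H(m)$ have the same degree as $D_p$; then Lemma~\ref{divalg}(ii) yields $D_p^H\cong Z^H\otimes_{Z^H(m)}D_p^H(m)$ and $[Z^H:Z^H(m)]=[D_p^H:D_p^H(m)]<p$, reducing the whole problem to the statement $Z^H(m)=Z^H$ about commutative \emph{fields}. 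Second, on the field $Z^H$ the obstruction $\partial$ restricts to a derivation whose kernel is exactly $Z^H(m)$, and its vanishing is forced by a minimal-polynomial argument: for $z\in Z^H$ with minimal polynomial $P$ over $Z^H(m)$ one has $0=\partial P(z)=P'(z)\partial(z)$, and $\deg P\le[Z^H:Z^H(m)]<p$ guarantees $P'(z)\ne 0$ in characteristic $p$, whence $\partial(z)=0$. It is this passage to the center together with the bound $\deg P<p$ --- not the abundance of explicit liftable invariants by itself --- that kills the obstruction class; your proposal would need to be rebuilt around these two steps to close the gap.
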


\begin{proof}
The degree of the division algebra $D_p^H(m)$
must be $p^s$ for some $s\le n$, since it is contained in
the division algebra $D_p$ which has degree $p^n$.
If $s<n$, then by Lemma \ref{divalg}(i), $[D_p:D_p^H(m)]$ has to be at least $p$.
But by Lemma~\ref{generation}, we have
$$
[D_p: D_p^H(m)]\le 1+2n+(2n)^2+\dots+(2n)^N,
$$
which is less than $p$ for $p$ sufficiently large. This means that for $p\gg 0$, we have $s=n$ and thus the degree of $D_p^H(m)$ is $p^n$. That is, the degrees of $D_p$ and $D_p^H(m)$, $m\ge 1$, including $D_p^H(1)=D_p^H$, are all the same. Thus, by Lemma~\ref{divalg}(ii), we have $Z^H(m-1)\supset Z^H(m)$ for all $m\ge 2$, and
$$
D_p^H(m-1) \cong Z^H(m-1)\otimes_{Z^H(m)}D_p^H(m).
$$
Hence, for $m \geq 1$,
\begin{equation} \label{eq:sat}
D_p^H=D_p^H(1) \cong Z^H(1)\otimes_{Z^H(m)}D_p^H(m)=Z^H \otimes_{Z^H(m)}D_p^H(m).
\end{equation}
Moreover,
\begin{equation} \label{eq:lessthanp}
[Z^H:Z^H(m)]=[D_p^H:D_p^H(m)]<p;
\end{equation}
this inequality holds as $p$ is sufficiently large.
\par \smallskip

Now let us prove that $D_p^H(m)=D_p^H$ by induction in $m$.
The statement for $m=1$ is trivial, so we may assume that $m\ge 2$ and the statement is known below $m$. \par \smallskip

Consider the spectral sequence attached to the filtration by powers of $p$ to compute the associated graded space of the cohomology of $H_{p^m}$ with coefficients in $D_{p^m}$ (in particular, of the zeroth cohomology, which is $D_{p^m}^H$). The $E_2$ page of this spectral sequence is defined by $E_2^{i,j}=H^i(H_p,D_p)$, and our job is to show that it degenerates at $E_2$ for $i=0$, i.e., that the differentials $d_1, \dots, d_{m-1}$
 vanish for $i=0$. By the induction assumption, the differentials
$$
d_1,\dots,d_{m-2}: D_p^H\to H^1(H_p,D_p)={\rm Ext}^1_{H_p}(\overline{\Bbb F}_p,D_p)
$$
are zero. Further,
we have a differential
$$
\partial :=d_{m-1}: D_p^H\to H^1(H_p,D_p).
$$
The restriction of $\partial$ to $Z^H$ is a derivation of $Z^H$ into the module
$H^1(H_p,D_p)$. Moreover, ${\rm Ker}(\partial |_{Z^H})=Z^H(m)$. (Indeed, for $z \in  Z^H$, $d_{m-1}(z)$ characterizes the failure of $z$ to lift modulo $p^m$ when it is known to lift modulo $p^{m-1}$.)

\par \smallskip

Now take $z\in Z^H$, and let its minimal polynomial over $Z^H(m)$ be $P$.
So, we obtain $0=\partial P(z)=P'(z)\partial(z)$. Since $[Z^H:Z^H(m)]<p$ by \eqref{eq:lessthanp}, we have $\deg(P)<p$. So, $P'(z)\ne 0$  and we get that $\partial(z)=0$. Thus, $Z^H(m)={\rm Ker}(\partial |_{Z^H}) = Z^H$, and hence $D_p^H(m)=D_p^H$ by \eqref{eq:sat}.
\end{proof}

\subsection{Invariance of $Z^{p^m}$ under the action of $H_p$.}

Suppose that $p\gg 0$.

 \begin{lemma}\label{central} The centralizer of $D_{p^m}^H$ in $D_{p^m}$ coincides with $Z_m$. As a consequence, $Z_m$ is $H_{p^m}$-stable.
 \end{lemma}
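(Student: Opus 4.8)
The plan is to show both inclusions. The containment $Z_m \subseteq \mathrm{Cent}_{D_{p^m}}(D_{p^m}^H)$ is immediate, since $Z_m$ is the center of $D_{p^m}$ and so commutes with everything, in particular with the subalgebra $D_{p^m}^H$. The real content is the reverse inclusion: any element of $D_{p^m}$ commuting with all of $D_{p^m}^H$ must already lie in $Z_m$. The guiding intuition, as flagged in the structure overview, is that $D_{p^m}^H$ is \emph{large} (by Lemma \ref{generation}, $D_{p^m}=D_{p^m}^H A_{p^m}$, so $D_{p^m}^H$ together with the Weyl generators generates everything), and commuting with a large set is a strong constraint.

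The key step I would carry out is to leverage Lemma \ref{generation} and Lemma \ref{saturation} together. By Lemma \ref{generation} we have $D_{p^m}=D_{p^m}^H A_{p^m}$, so $D_{p^m}$ is spanned over $D_{p^m}^H$ by monomials in the $x_i,y_i$ of bounded degree. Suppose $c\in D_{p^m}$ centralizes $D_{p^m}^H$. To show $c$ is central, it suffices to show $c$ commutes with every $x_i$ and $y_i$, since these generate $A_{p^m}$ and hence, together with $D_{p^m}^H$, all of $D_{p^m}$. The plan is to find, for each generator $x_i$ (resp.\ $y_i$), an invariant in $D_{p^m}^H$ whose commutation relations with $c$ pin down $[c,x_i]$ (resp.\ $[c,y_i]$) and force it to vanish. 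Concretely, I expect to use the reductions $b_{w,u,p^m}\in D_{p^m}^H$ produced in the proof of Lemma \ref{generation}, expressing high-degree monomials in terms of low-degree ones with invariant coefficients; commuting $c$ through such an identity and using that $c$ already commutes with the invariant coefficients $b_{w,u,p^m}$ yields relations that, degree by degree, constrain $[c,x_i]$ and $[c,y_i]$ to be zero.

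The main obstacle will be handling the fact that $D_{p^m}$ is not a division algebra but only an Artinian ring (it has the nilpotent ideal $(p)$, with $\mathrm{gr}(D_{p^m})=D_p[z]/(z^m)$), so one cannot argue purely by dimension counts or by invertibility as one would over a field. I anticipate the cleanest route is an induction on $m$ via the filtration by powers of $p$: the base case $m=1$ is the division-algebra statement over $D_p$, where the argument is genuinely about commuting with a spanning set in a division ring; the inductive step then lifts centrality from $D_{p^{m-1}}$ to $D_{p^m}$, where the saturation of invariants established in Lemma \ref{saturation} is exactly what guarantees that enough invariants survive modulo $p^m$ to repeat the constraint and control the new $p^{m-1}$-torsion layer. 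Once the centralizer is identified with $Z_m$, the consequence that $Z_m$ is $H_{p^m}$-stable is formal: the action of $H_{p^m}$ permutes $D_{p^m}^H$-centralizing elements, because $D_{p^m}^H$ is $H_{p^m}$-stable (invariants form a subalgebra preserved by the action up to the coalgebra structure), so the centralizer $Z_m$ is carried to itself.
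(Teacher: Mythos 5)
Your formal skeleton is fine: both inclusions, the reduction to showing $[c,x_i]=[c,y_i]=0$ once $[c,D_{p^m}^H]=0$ (valid, since $D_{p^m}=D_{p^m}^HA_{p^m}$ by Lemma \ref{generation}), the idea of handling the non-division-ring issue by an induction on $m$ through the $p$-adic filtration, and the formal deduction of $H_{p^m}$-stability at the end. But the engine of your proof --- that commuting $c$ through the identities $w=\sum_u b_{w,u,p^m}u$ and using $[c,b_{w,u,p^m}]=0$ forces $[c,x_i]=0$ ``degree by degree'' --- does not work. Commuting through such an identity only yields $[c,w]=\sum_u b_{w,u,p^m}[c,u]$, i.e., it expresses commutators with high-degree monomials in terms of commutators with low-degree ones; no vanishing ever comes out. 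More fundamentally, no argument based solely on ``$D_{p^m}^H$ is large and spans'' can succeed: centralizing a division subalgebra of finite index does not in general place an element in the center. For instance, a maximal subfield $L\subset D_p$ satisfies $[D_p:L]=p^n<\infty$, so $D_p$ is spanned over $L$ by finitely many elements, yet the centralizer of $L$ is $L$ itself, much larger than $Z$. What is actually needed --- and is equivalent, by the double centralizer theorem, to the assertion that the centralizer of $D_p^H$ in $D_p$ is $Z$ --- is the identity $ZD_p^H=D_p$ of Proposition \ref{propCEW}(i). Your proposal never invokes this, nor any substitute for it (such as the double centralizer theorem combined with the index bound $[D_p:D_p^H]<p$ and the fact, via Lemma \ref{divalg}, that all relevant degrees are powers of $p$); so the core of the hard inclusion is missing.

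The paper's proof supplies exactly the two ingredients you lack. Given $u$ centralizing $D_{p^m}^H$, it takes the largest $r$ with $[D_{p^m},u]\subset p^rD_{p^m}$ (your induction on $m$ would serve the same purpose) so that $a\mapsto[a,u]$ induces a nonzero derivation $\partial$ of $D_p$ if $u$ is not central; by Lemma \ref{saturation} every element of $D_p^H$ lifts to $D_{p^m}^H$, so $\partial(D_p^H)=0$ --- this is where saturation really enters, as you guessed. Then, since $D_p=ZD_p^H$ and $\partial$ preserves the center $Z$, it suffices to prove $\partial(Z)=0$, and this is done by a separability argument: for $z\in Z$ with minimal polynomial $P$ over $Z^H$, one has $0=\partial P(z)=P'(z)\partial(z)$, and since $\deg P\le[Z:Z^H]\le\dim H<p$, the polynomial $P$ is separable and $P'(z)\ne 0$, whence $\partial(z)=0$. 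This minimal-polynomial step is essential and cannot be replaced by ``abundance of invariants'': in characteristic $p$ a nonzero derivation can perfectly well vanish on a subfield while acting nontrivially on an inseparable extension of it, so the bound $[Z:Z^H]<p$ (hence $p\gg 0$) is doing real work. With Proposition \ref{propCEW}(i) and this separability argument inserted into your inductive step, your outline becomes a correct proof --- essentially the paper's, with the ``largest $r$'' device replaced by induction on $m$.
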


 \begin{proof}
 Let $u\in D_{p^m}$ be such that $[D_{p^m}^H,u]=0$. The map $D_{p^m}\to D_{p^m}$ given by
$a\mapsto [a,u]$ is a derivation of $D_{p^m}$.  By way of contradiction, suppose that this derivation is nonzero. Let $r$ be the largest integer such that \linebreak $[D_{p^m},u]\subset p^rD_{p^m}$. Then, $[?,u]$ defines a nonzero map $\partial: D_p\to D_p$, such that $\partial a$ is the image of $[\widetilde{a},u]$ in $p^rD_{p^m}/p^{r+1}D_{p^m}\cong D_p$ for any lift $\widetilde{a}$ of $a$ to $D_{p^m}$. It is clear that $\partial$ is a derivation, so $\partial(Z)\subset Z$. Also, by Lemma \ref{saturation}, $\partial(D_p^H)=0$. \par \smallskip

From Proposition~\ref{propCEW}(i) we obtain $ZD_p^H=D_p$. Thus,
to get a contradiction, it suffices to show that $\partial(Z)=0$. Let $z\in Z$, and $P$ be the minimal polynomial of $z$ over $Z^H$. Since $\partial(Z^H)=0$, we have $0=\partial P(z)=P'(z)\partial z$. Since $p \gg 0$, $[Z:Z^H]\le \dim H <p$, and hence, $P' \neq 0$. Thus,  $\partial z=0$, which gives the desired contradiction. \par \smallskip

The last statement follows since $(h \cdot z) a = a(h \cdot z)$, for $h \in H_{p^m}$, $a \in D_{p^m}^H$, and for $z$ in the centralizer of $D_{p^m}^H$ in $D_{p^m}$.
 \end{proof}

\begin{lemma}\label{center}
The image of $Z_m$ in $D_p$ is $Z^{p^{m-1}}$.
\end{lemma}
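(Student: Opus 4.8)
The plan is to identify both sides of the claimed equality explicitly and check that they agree. First I would pin down the center $Z$: since $\car\overline{\FF}_p = p$, the elements $x_i^p$ and $y_i^p$ are central in $A_p=\bold A_n(\overline{\FF}_p)$, and the center of $D_p$ is the field of rational functions $Z=\overline{\FF}_p(x_i^p,y_i^p:1\le i\le n)$ (consistent with $D_p$ having degree $p^n$). Applying the Frobenius $p^{m-1}$ times and using that $\overline{\FF}_p$ is perfect, I get $Z^{p^{m-1}}=\overline{\FF}_p(x_i^{p^m},y_i^{p^m}:1\le i\le n)$. On the other hand, reducing $K_m=W_{m,p}(x_i^{p^m},y_i^{p^m})$ modulo $p$ sends $W_{m,p}$ onto $\overline{\FF}_p$, so the image of $K_m$ in $D_p$ is exactly this same field. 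Thus the target of the lemma is precisely the image of $K_m$, and it remains to compare the images of $Z_m$ and of $K_m$.

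For the inclusion $Z^{p^{m-1}}\subseteq$ (image of $Z_m$), it suffices to recall from Section~\ref{centerst} that $K_m\subseteq Z_m$; hence the image of $Z_m$ contains the image of $K_m$, which is $Z^{p^{m-1}}$.

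For the reverse inclusion I would invoke Lemma~\ref{centerstr}, which presents $Z_m$ as the $K_m$-module generated by $1$ (i.e.\ by $K_m$ itself) together with the elements $v_{\alpha,\beta}=p^{m-s(\alpha,\beta)}(\prod_i x_i^{\alpha_i})(\prod_i y_i^{\beta_i})$ with $s(\alpha,\beta)>0$. The crux is the elementary observation that every such $v_{\alpha,\beta}$ lies in $pD_{p^m}$, hence reduces to zero in $D_p$: whenever $(\alpha,\beta)\ne 0$, some exponent, say $\alpha_i$, is nonzero and satisfies $p^{s(\alpha,\beta)}\mid \alpha_i$ with $0<\alpha_i\le p^m-1<p^m$, which forces $s(\alpha,\beta)\le m-1$ and therefore $m-s(\alpha,\beta)\ge 1$; the degenerate case $(\alpha,\beta)=0$ gives $p^m\cdot 1=0$. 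Since $K_m$-multiples of elements of $pD_{p^m}$ again lie in $pD_{p^m}$, the entire $K_m$-span of the $v_{\alpha,\beta}$ maps to zero, so the image of $Z_m$ reduces to the image of its subring $K_m$, namely $Z^{p^{m-1}}$.

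This is essentially a bookkeeping argument once Lemma~\ref{centerstr} is available, so I do not anticipate a serious obstacle. The only two points requiring care are the valuation bound $s(\alpha,\beta)\le m-1$ (which is what places every nontrivial central generator inside $pD_{p^m}$ and makes it vanish mod $p$), and the clean identification of $Z^{p^{m-1}}$ with the mod-$p$ reduction of $K_m$, for which perfectness of $\overline{\FF}_p$ is exactly what is needed.
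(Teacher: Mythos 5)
Your proof is correct and takes essentially the same route as the paper: the paper's own argument also rests on Lemma~\ref{centerstr}, recast as the decomposition $Z_m=K_m+pZ_{m-1}$, together with the observation that $K_m$ projects surjectively onto $Z^{p^{m-1}}=\overline{\FF}_p\big(x_i^{p^m},y_i^{p^m}\big)$ under reduction modulo $p$. Your explicit check that every generator $v_{\alpha,\beta}$ lies in $pD_{p^m}$ (via the bound $s(\alpha,\beta)\le m-1$) is simply the unwound form of that decomposition, so the two arguments coincide in substance.
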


\begin{proof} This is a straightforward calculation with the Weyl algebra.
Namely, recall the subring $K_m\subset Z_m$ defined in Subsection \ref{centerst}.
It follows from Lemma \ref{centerstr} that $Z_1=Z=K_1$ and $Z_m=K_m+pZ_{m-1}$ for $m\ge 2$.
But
\begin{equation*}
Z^{p^m}=\overline{\Bbb F}_p\left(x_i^{p^{m+1}},y_i^{p^{m+1}}~:~ i=1,\dots,n\right),
\end{equation*}
hence $K_m$ projects surjectively onto $Z^{p^{m-1}}$ under reduction modulo $p$.
This implies the statement.
\end{proof}

\begin{proposition}\label{invariance}
The $H_p$-action on $Z$ preserves $Z^{p^m}$ for all $m$.
\end{proposition}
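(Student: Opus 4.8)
The plan is to deduce the statement directly from Lemmas~\ref{central} and~\ref{center}, together with the compatibility of the various $H$-actions under reduction modulo $p$. Fix $m\ge 1$. First I would apply Lemma~\ref{central}, with $m$ replaced by $m+1$, to conclude that the center $Z_{m+1}$ of $D_{p^{m+1}}$ is stable under the $H_{p^{m+1}}$-action.

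Next, I would record the key compatibility that links characteristic $p^{m+1}$ back to characteristic $p$. By the construction in Subsection~\ref{centerst}, the $H_{p^{m+1}}$-action on $D_{p^{m+1}}$ is obtained by tensoring the $H_R$-action on $A_R$ with $W_{m+1,p}$ over $R$, and reducing the coefficients along the base change $W_{m+1,p}\to W_{1,p}=\overline{\mathbb{F}}_p$ turns this into the $H_p$-action on $D_p$. Concretely, for $h\in H_p$, choosing any lift $\widetilde h\in H_{p^{m+1}}$ and any lift $\widetilde u\in D_{p^{m+1}}$ of a given $u\in D_p$, one has $h\cdot u=\overline{\widetilde h\cdot\widetilde u}$, where the bar denotes reduction modulo $p$. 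In particular, the image in $D_p$ of any $H_{p^{m+1}}$-stable subset is $H_p$-stable.

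Applying this to the $H_{p^{m+1}}$-stable subspace $Z_{m+1}$ and invoking Lemma~\ref{center} (again with $m$ replaced by $m+1$), whose content is precisely that the image of $Z_{m+1}$ in $D_p$ equals $Z^{p^m}$, I conclude that $Z^{p^m}$ is $H_p$-stable. Since $m\ge 1$ was arbitrary, this proves the proposition. (The borderline case that $Z$ itself is $H_p$-stable is already furnished by Proposition~\ref{propCEW}(i), or equivalently by Lemma~\ref{central} with $m=1$, since $Z_1=Z$.)

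I do not expect a serious obstacle here, as the entire difficulty has been absorbed into the two preceding lemmas. The only point that genuinely requires care is the well-definedness of the reduced-action identity $h\cdot u=\overline{\widetilde h\cdot\widetilde u}$ above --- that reduction modulo $p$ truly intertwines the $H_{p^{m+1}}$- and $H_p$-actions, independently of the chosen lifts. This is immediate from the tensor-product definition of the reductions, since reduction modulo $p$ is the base change $W_{m+1,p}\to\overline{\mathbb{F}}_p$ and the action is $W_{m+1,p}$-linear; any ambiguity in the lifts $\widetilde h,\widetilde u$ lies in $pH_{p^{m+1}}$ respectively $pD_{p^{m+1}}$, which maps to zero under reduction.
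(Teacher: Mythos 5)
Your proof is correct and is essentially the paper's own argument: Lemma~\ref{central} gives stability of the center in the mod-$p^{m+1}$ reduction, and Lemma~\ref{center} identifies its image in $D_p$ with a power of the Frobenius-twisted center, with reduction modulo $p$ intertwining the actions. You are in fact slightly more careful than the paper, which applies the lemmas at level $m$ (where the image is $Z^{p^{m-1}}$) and quietly absorbs the index shift, whereas you make the shift to $m+1$ and the lift-independence of the reduced action explicit.
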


\begin{proof}
It follows from Lemma \ref{central} that $H_{p^m}$ preserves $Z_m$. Therefore, by Lemma \ref{center},
$H_p$ preserves $Z^{p^m}$.
\end{proof}

\subsection{Proof of Theorem~\ref{main}}\label{proofsection} Let $p\gg 0$.
By Proposition~\ref{propCEW}(ii), $H_p$ acts inner faithfully on $Z$. Therefore,
by Proposition \ref{invariance}, the assumptions of Theorem \ref{tower}
applied to the $H_p$-action on $Z$ are satisfied. So by Theorem~\ref{tower},
$H_p$ is cocommutative (a group algebra). But by
\cite[Lemma 2.3(ii)]{CEW}, the product of all $\psi$ for $p\ge \ell$ is injective for any $\ell$, so we conclude that $H_R$ is cocommutative.
Hence, $H$ is cocommutative. Thus, $H=kG$, where $G$ is a finite group, and
Theorem~\ref{main} is proved.  \qed

\subsection{Proof of Theorem~\ref{diffop}} The proof is parallel to that of Theorem~\ref{main}, and obtained by replacing $A=\bold A_n(k)$ by $A=D(X)$, using the fact that the reduction of $X$ mod $p$ is smooth for large $p$ and generic $\psi$ 
\cite[17.7.8(ii)]{EGA}. 

Let us list the necessary changes. \par \smallskip

1. In Lemma \ref{express} and below, $x_i,y_i$ should be replaced by any finite set of generators $L_1,\dots,L_r$ of $D(X)$, and the number $2n$ in the proof of Lemma~\ref{saturation} should be replaced by $r$. \par \smallskip

2. The discussion in Subsection \ref{centerst} should be modified as follows.
Pick a point $x\in X$, and let $x_1,\dots,x_n$ be local coordinates near $x$.
Let $y_i=\frac{\partial}{\partial x_i}$ be the corresponding partial
derivatives; they are rational vector fields on $X$.
Let $f_1,\dots,f_q$ be generators of the algebra of regular functions $\mathcal{O}(X)$ on $X$. Let $K_m=W_{m,p}(f_i^{p^m},y_j^{p^m})$, where reductions of $f_i,y_j$ modulo $p^m$ are also denoted by $f_i,y_j$, respectively.
Then, one can check by computing in local coordinates that $Z_m=K_m+pZ_{m-1}$, so that the proof of Lemma~\ref{center} goes through. \qed

\section*{Acknowledgments}
\noindent

We thank the referee for a careful reading of this manuscript, which led to the addition of the sublemma in the proof of Lemma~4.1. We are also grateful to B. Poonen for a precise reference to [EGA].
J. Cuadra was supported by grant MTM2014-54439 from MINECO and FEDER and by the research group FQM0211 from Junta de Andaluc\'{\i}a. P. Etingof and C. Walton were supported by the US National Science Foundation, grants DMS-1000113,1502244 and DMS-1550306, respectively. \par \smallskip

This work was mainly done during the visit of the first named author to the Mathematics department of MIT. He is deeply grateful for the kind and generous hospitality and stimulating atmosphere. This visit was financed through grant PRX14/00283 from the Spanish Programme of Mobility of Researchers.

\end{document}